\begin{document}
	\markboth{Hongnan Chen , Fenglin Huang, Sihui Zhang}{On the length of an arithmetic progression}
	
	%
	
	\title{On the length of an arithmetic progression of the form ${3^x+2^y}$
	}
	
	\author{Hongnan Chen}
	
	\address{College of Science, University of Shanghai for Science and Technology, Shanghai\\
		Shanghai, 200093, China\\
		\email{chn336699@126.com} }
	
	\author{Fenglin Huang}
	
	\address{School of Mathematical and Sciences, Peking University, Beijing\\
		Beijing, 100091, China\\
		2000010755@stu.pku.edu.cn}
	
	\author{Sihui Zhang\footnote{
			corresponding author}}
	
	\address{College of Science, University of Shanghai for Science and Technology, Shanghai\\
		Shanghai, 200093, China\\
		071018005@fudan.edu.cn}

	\maketitle

	\begin{abstract}
		The conclusion that the length of an arithmetic progression of the form ${3^x+2^y}$ is at most six is proved.
	\end{abstract}
	
	\keywords{arithmetic progression; representation.}
	
	\ccode{Mathematics Subject Classification 2020: 11D85, 11B25}

	\section{Introduction} \label{section1}
	
	The problem of a positive integers are represented as ${3^x+2^y}$ has been an interestng topic in the theoretical study of mathematics. Iannucci, D.E proved in [1] that there are exactly five positive integers that
	can be written in more than one way as the sum of a nonnegative
	power of 2 and a nonnegative power of 3. This proposition is given below.
	
	\begin{proposition} {\rm (cf. [1])}\ \ There are exactly five positive integers that can be written in multiple ways as the sum of non-negative powers of 2 and non-negative powers of 3, and there are only five integers that can be expressed in two ways as  ${3^a+2^b\left( a,b\in N\right) }$: 5, 11, 17, 35, 259. The five elements listed are$$5=2^2+3^0=2^1+3^1,\ 11=2^3+3^1=2^1+3^2,\ \ 17=2^4+3^0=2^3+3^2,$$ $$ 35=2^5+3^1=2^3+3^3,\ \ 259=2^8+3^1=2^4+3^5.$$ 
		\label{prop 1.1}
	\end{proposition}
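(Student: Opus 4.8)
The plan is to determine all positive integers admitting two different representations $3^{a_1}+2^{b_1}=3^{a_2}+2^{b_2}$ and to show they are exactly the five listed. First I would normalize: since swapping the representations is harmless I may assume $a_1>a_2$, which forces $b_2>b_1$ (equal $3$-powers would force equal $2$-powers, hence the same representation). Writing $m=a_1-a_2\ge 1$ and $n=b_2-b_1\ge 1$, the equation becomes
\[
3^{a_2}\left(3^{m}-1\right)=2^{b_1}\left(2^{n}-1\right).
\]
Comparing the exact power of $2$ on each side (note $3^{a_2}$ and $2^{n}-1$ are odd) gives $b_1=v_2(3^m-1)$, and comparing the exact power of $3$ (note $3^m-1$ is prime to $3$) gives $a_2=v_3(2^n-1)$, where $v_p(\cdot)$ is the $p$-adic valuation. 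Cancelling these factors reduces the whole question to a two-variable one: the part of $3^m-1$ prime to $6$ must equal the part of $2^n-1$ prime to $6$. Denoting this common value by $c$, I obtain
\[
3^{m}-1=2^{\,v_2(3^m-1)}\,c,\qquad 2^{n}-1=3^{\,v_3(2^n-1)}\,c,
\]
and one checks that $c=\gcd(3^m-1,2^n-1)$. Thus everything comes down to finding all $m,n\ge 1$ for which $3^m-1$ stripped of its $2$-part agrees with $2^n-1$ stripped of its $3$-part.

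Next I would dispose of the degenerate branch $c=1$. Here $3^m-1$ is a pure power of $2$ and $2^n-1$ is a pure power of $3$, i.e.\ $3^m-2^{\,b_1}=1$ and $2^n-3^{\,a_2}=1$. These are Catalan-type equations and are handled elementarily (for instance by reducing modulo $8$ and modulo $3$), forcing $m\in\{1,2\}$ and $n\in\{1,2\}$; alternatively one may simply quote Mihailescu's theorem. The four resulting pairs $(m,n)$ produce exactly $5,11,17,35$.

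The remaining branch $c\ge 5$ is where the real difficulty lies, and it amounts to bounding $m$ and $n$. My approach would be to invoke Zsygmondy's theorem: for $m\ge 3$ the number $3^m-1$ has a primitive prime divisor $p$ (necessarily prime to $6$, with $\mathrm{ord}_p(3)=m$), which therefore divides $c$ and hence $2^n-1$, so that $\mathrm{ord}_p(2)\mid n$; a symmetric statement holds for a primitive prime of $2^n-1$ (the single exception $n=6$ being checked by hand). Since $c$ must simultaneously be the \emph{entire} prime-to-$6$ part of both $3^m-1$ and $2^n-1$, every prime $p\ge 5$ must divide $3^m-1$ and $2^n-1$ to exactly the same power; via the lifting-the-exponent lemma these valuations are governed by $\mathrm{ord}_p(2)$, $\mathrm{ord}_p(3)$ and by $v_p(m),v_p(n)$, so the matching condition is extremely rigid. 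I expect this rigidity, together with the size relation $m\log 3\approx n\log 2$ forced by $c\le\min(3^m,2^n)$ (the valuations being negligible next to $m,n$), to confine $(m,n)$ to a finite explicitly searchable range whose only surviving solution is $(m,n)=(4,4)$ with $c=5$, giving $259$.

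The step I expect to be the genuine obstacle is precisely this last bounding: turning the heuristic ``the prime-to-$6$ parts of $3^m-1$ and $2^n-1$ almost never coincide'' into an effective finiteness statement. A purely elementary treatment seems to require a delicate case analysis on the orders $\mathrm{ord}_p(2)$ and $\mathrm{ord}_p(3)$ for the relevant primes; should that become unwieldy, the clean alternative is to view the identity as an $S$-unit equation with $S=\{2,3\}$ and apply a lower bound for linear forms in two logarithms to make $\max(m,n)$ explicit, after which a finite computation completes the proof.
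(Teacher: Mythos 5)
The paper does not actually prove this proposition: it is quoted verbatim from Iannucci's paper [1] with a ``cf.''\ citation, so there is no in-paper argument to compare yours against. Judged on its own terms, your reduction is correct and cleanly done: from $3^{a_1}+2^{b_1}=3^{a_2}+2^{b_2}$ with $a_1>a_2$, $b_2>b_1$ you correctly derive $3^{a_2}(3^m-1)=2^{b_1}(2^n-1)$, read off $b_1=v_2(3^m-1)$ and $a_2=v_3(2^n-1)$ by comparing valuations, and reduce the problem to the condition that the prime-to-$6$ parts of $3^m-1$ and $2^n-1$ coincide. The degenerate branch $c=1$ is handled completely (the two Catalan-type equations $3^m-2^{b_1}=1$ and $2^n-3^{a_2}=1$ do yield $m,n\in\{1,2\}$ by the mod $8$ and factoring arguments you indicate), and the four resulting pairs give $5,11,17,35$; the pair $(m,n)=(4,4)$ with $c=5$ gives $259$. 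All of that checks out.

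The genuine gap is the one you yourself flag: the branch $c\ge 5$ is not proved, only described. Zsygmondy gives you a primitive prime $p\mid 3^m-1$ with $\mathrm{ord}_p(2)\mid n$ and symmetrically, and lifting-the-exponent gives $2^{b_1}\le 4m$ and $3^{a_2}\le 3n$, but these are divisibility and size \emph{constraints}, not a bound on $\max(m,n)$; nothing in the proposal converts ``the matching condition is extremely rigid'' into a finite search range. In particular, the size relation $m\log 3\approx n\log 2$ together with $|3^{a_1}-2^{b_2}|=|2^{b_1}-3^{a_2}|\le\max(4m,3n)$ leaves you needing a lower bound for $|3^u-2^v|$ that is superlinear in $u$, which is exactly the hard content of the theorem (and of Pillai-type problems generally); it does not follow from the tools you have assembled. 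As written, the proposal is a correct reduction plus an unverified assertion that $(m,n)=(4,4)$ is the only nondegenerate solution. To close it you would need either to carry out the linear-forms-in-two-logarithms estimate you mention (with an explicit constant and the finite verification), or to reproduce an elementary bounding argument of the kind Iannucci gives in [1]; until one of these is done the proposition is cited, not proved.
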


	Pillai states that for each fixed integer $c\ge1$, the Diophantine equation
	$$a^x-b^y=c, min{\left\lbrace x,y\right\rbrace }\ge2$$
	has only a finite number of positive solutions ${a,b,x,y}$. This problem is still open, it is still attracting a large number of researchers to study this issue.
	
	M. Ddamulira in [2] found that all integers $c$ having at least two representations as a difference between a Fibonacci number and
	a power of 3. Later, M. Ddamulira et al in [3] porved that all integers $c$ with at least two representations as a difference between a k-generalized Fibonacci number and a power of 3. In this paper, we prove the following conclusion.
	\begin{theorem}\label{thm 1.1} The length of an arithmetic progression of the form ${3^x+2^y}$ for nonnegative integers $x$ and $y$ is at most six.
		
	\end{theorem}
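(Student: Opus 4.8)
The plan is to fix the progression $t_k = a + kd$ (with $k = 0,1,\dots,n-1$, $d \neq 0$, and each $t_k = 3^{x_k} + 2^{y_k}$) and to split on the parity of $d$ after recording two structural facts to be used throughout: neither the powers of $2$ nor the powers of $3$ contain a three-term arithmetic progression. Indeed, $2^a,2^b,2^c$ with $a<b<c$ in progression would force $2^a(2^{b-a}-1) = 2^b(2^{c-b}-1)$, impossible on comparing $2$-adic valuations, while $2\cdot 3^b = 3^a+3^c$ is impossible on reducing modulo $3$ after dividing by $3^a$. First I reduce modulo $2$: since $3^x+2^y$ is even exactly when $y=0$, an even common difference forces all terms to share a parity. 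If all terms are even then every $y_k=0$, so the numbers $t_k-1 = 3^{x_k}$ form a progression of powers of $3$, whence $n \le 2$.

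Next I treat odd $d$ through residues modulo $8$. A direct check shows $3^x+2^y \bmod 8 \in \{1,2,3,4,5,7\}$, so the classes $0$ and $6$ are never represented. When $d$ is odd the residues $a+kd \pmod 8$ run through a complete system with period $8$, and whatever the value of $d \bmod 8$, the two forbidden classes recur so as to leave a longest admissible run of five consecutive terms; hence $n \le 5$ for odd $d$. This leaves exactly one case, and it is the case containing the extremal example $3,5,7,9,11,13$ (common difference $2$): an even common difference with every term odd, so that $y_k \ge 1$ throughout.

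For this crux case I would exploit two congruences simultaneously. Modulo $3$ one has $3^x+2^y \equiv 0$ iff $x=0$ and $y$ is odd, in which case $t_k = 1+2^{y_k}$; when $3 \nmid d$ the multiples of $3$ occupy the positions of a sub-progression, so by the no-three-term fact for powers of $2$ at most two terms are divisible by $3$, which is already contradictory in the alignment placing them at positions $0,3,6$. Modulo $8$ the representation is pinned down further: a term $\equiv 7 \pmod 8$ forces $y_k=2$ (so $t_k-4$ is a power of $3$), a term $\equiv 1 \pmod 8$ forces $y_k \ge 3$, and the classes $3,5$ admit only two possibilities each. Feeding these into the progression relations produces equations of the shape $4d = 3^{x}(3^{\delta}-1)$ and $3d = 2^{y}(2^{\Delta}-1)$; comparing $2$-adic and $3$-adic valuations in these, across the subcases $3 \mid d$ versus $3 \nmid d$ and $d \equiv 2$ versus $0 \pmod 4$, bounds the surviving exponents, after which a finite check rules out a seventh term and gives $n \le 6$.

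The main obstacle is precisely this last case. No single congruence can finish it, because $3,5,7,9,11,13$ has even difference and odd terms and locally satisfies every congruence a hypothetical seventh term would need; the obstruction to length seven is global, coming from the sparsity and growth of $3^x+2^y$ rather than from one modulus. The delicate point is therefore to combine the modular constraints with a bound on the exponents — in the spirit of the linear-forms-in-logarithms arguments of Ddamulira et al.\ cited above — tightly enough that only finitely many exponent configurations survive, and then to eliminate each of them. Keeping this analysis complete and non-redundant, especially the alignments in which only two terms are divisible by $3$, is where the real work lies.
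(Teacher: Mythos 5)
Your preliminary reductions are correct and efficiently stated: the observation that neither the powers of $2$ nor the powers of $3$ contain a three-term arithmetic progression, the disposal of the all-even case (every $y_k=0$ forces the $3^{x_k}$ into progression), and the mod $8$ computation showing $3^x+2^y$ never lies in the residue classes $0$ or $6$, which does eliminate every length-seven progression with odd common difference (seven terms with invertible difference hit seven of the eight classes and so cannot avoid two forbidden ones). The difficulty is that the proof stops exactly where the theorem begins. The case of even $d$ with all terms odd is not a residual subcase to be ``fed into the progression relations''; it is the entire content of the result, and your own text concedes that ``the real work lies'' there without supplying that work. Two concrete holes: (i) in the sub-case $3\nmid d$, your mod $3$ argument yields a contradiction only when the terms divisible by $3$ occupy positions $0,3,6$; for the alignments with just two such terms you have no argument, whereas the paper's Lemma \ref{lem2.3} closes this by applying a parity-of-$y_i$ argument to every window of three consecutive terms to show that at least two of $x_1,\dots,x_6$ vanish, and then deriving a size contradiction; (ii) for the main case you invoke ``a bound on the exponents in the spirit of linear forms in logarithms'' and ``a finite check,'' but no such bound is stated or proved, and the equations $4d=3^{x}(3^{\delta}-1)$, $3d=2^{y}(2^{\Delta}-1)$ by themselves leave infinitely many exponent configurations.

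The mechanism your sketch is missing --- and which requires no transcendence input --- is elementary size localization. Setting $m=[\log_3 a_1]$, $n=[\log_2 a_1]$ and first proving $d\ge 500$ and $6\mid d$, the paper shows inequalities such as $a+3d>\frac{17}{36}(a+6d)>3^{m-2}+2^{n-2}$, which force each of the top four terms $a_1,\dots,a_4$ to have its dominant summand equal to $3^m$, to $2^n$, or to one step below these; only finitely many type patterns survive, and each is then killed by $2$-adic and $3$-adic valuation comparisons together with further size inequalities (the range $m\le 8$ being settled by computer search). Without some substitute for that localization step, your congruence system does not reduce to a finite check, so the argument as written does not prove the theorem.
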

	
	\section{Proof of Theorem \ref{thm 1.1}}
	\label{section2}
	
	The example of an arithmetic progression of the form ${3^x+2^y}$ of length six for non-negative integers $x$ and $y$ is given below. The six elements listed are $$ 3=2^1+3^0,\ \ 5=2^2+3^0=2^1+3^1,\ \ 7=2^2+3^1,$$ $$ 9=2^0+3^2,\ \ 11=2^1+3^2=2^3+3^1,\ \ 13=2^2+3^2. $$ 
	
	In the following, we will prove that an arithmetic progression of the form ${3^x+2^y}$ of length seven for non-negative integers $x$ and $y$ does not exist.
	
	If there is an A.P. with a length of 7 in  $\left \{ 3^m+2^n|m,n \in \mathbb{N} \right \} $,  let it to	
	$${a_1=a+6d}, {a_2=a+5d}, \dots , {a_6=a+d}, {a_7=a}.$$Let $n=\left [ log_{2}a_1  \right ] $,  $m=\left [ log_{3}a_1  \right ] $.
	
	When $m \le 8$, we give the following program to enumerate, see appendix for program details. Therefore, we consider the case $m \ge 9 $.

	\subsection{The properties of $d$}
	\label{section 2.1}
	
	In this section, we will prove the three properties that $d\geq500$, $d$ is even, and $3\mid d$. These three lemmas are given below.
	
	\begin{lemma}   $d\geq500$.
		\label{lem2.1}
	\end{lemma}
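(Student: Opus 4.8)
The plan is to argue by contradiction: suppose a seven-term progression $a_1>\dots>a_7$ in the representable set exists with $d\le 499$, and derive a configuration that cannot occur once $m\ge 9$. Since $6d\le 2994<3^{8}\le 3^{m-1}$ while $a_1\ge 3^{9}=19683$, all seven terms lie in the interval $[a_7,a_1]=[a_1-6d,a_1]$, whose length is at most $2994$ and whose left endpoint $a_7$ exceeds $16000$. Consequently the larger summand of each term exceeds $8000$, so for every $i$ exactly one of $3^{x_i},2^{y_i}$ exceeds $a_i/2$; I call that summand dominant and the term $3$-dominant or $2$-dominant accordingly (equality is impossible, as $2^y=3^x$ only for the value $1$).

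First I would pin down the dominant powers. A dominant power of $3$ exceeds $a_7/2$, and a short computation shows $a_7/2>3^{m-1}$ precisely when $m\ge 9$; since every admissible three-part satisfies $3^{x_i}\le a_1<3^{m+1}$, the only power of $3$ in $(a_7/2,a_1]$ is $3^{m}$, so every $3$-dominant term uses the single three-part $3^{m}$. A dominant power of $2$ also lies in $(a_7/2,a_1]$, and because $a_1\ge 19683$ forces $2a_1/a_7<4$, this interval contains at most two powers of $2$; hence there are at most two distinct dominant two-parts.

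Next I would bound how many of the seven terms can share one fixed base-power. If several terms share the same three-part (respectively two-part), their complementary summands equal those terms minus a constant, so the complementary summands are powers of the other base whose pairwise differences are multiples of $d$ and whose total spread is at most $6d$. Writing $g$ for the greatest common divisor of the pairwise differences, we have $d\mid g$, so $6d\le 6g$; on the other hand, for any four powers of a fixed base the spread is at least $7g$ in base $2$ and at least $13g$ in base $3$ (the minimum occurring for consecutive exponents, e.g. $2^{b+3}-2^{b}=7\cdot 2^{b}$ and $3^{b+2}-3^{b}=8\cdot 3^{b}$ against the respective gcds). Thus four compatible powers of a base cannot coexist, and at most three terms share any single base-power.

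Combining these facts gives at most $3$ three-dominant terms and at most $2\cdot 3=6$ two-dominant terms, a crude total of nine that does not yet drop below seven; the remaining and hardest step is to exploit that one common difference $d$ must realize all of these sharings at once. A triple sharing pins $d$ down almost completely: three consecutive powers of $2$ force $d$ to be essentially $2^{a-1}$ or $2^{a}$, while three consecutive powers of $3$ force $d=2\cdot 3^{b}$, the largest admissible value being $d=2\cdot 3^{5}=486$. I expect the main obstacle to be excluding these borderline even multiples of $3$ just below $500$: for instance, to reach six two-dominant terms one would need both dominant two-parts $2^{y},2^{y+1}$ triple-shared by the same powers $\{3^{5},3^{6},3^{7}\}$, but then $2^{y}=(2^{y+1}+3^{5})-(2^{y}+3^{5})$ would have to be a multiple of $486=2\cdot 3^{5}$, which is impossible since $3^{5}\nmid 2^{y}$. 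Carrying out this consistency analysis case by case against the exact congruences $3^{x_i}+2^{y_i}\equiv a\pmod d$ and the precise position of the window above $3^{9}$ leaves at most six representable terms, contradicting the existence of the seventh and forcing $d\ge 500$.
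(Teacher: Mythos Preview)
Your overall strategy---contradiction via classifying each $a_i$ by its ``large'' summand, then showing that too many terms of one type cannot coexist in a window of length $6d$---matches the paper's, and your spread-versus-gcd argument that four terms cannot share a fixed base-power is correct (it is essentially the paper's valuation argument: since $\nu_2(g)\le e_1$, the three successive gaps divided by $g$ have $2$-adic valuations at least $0,1,2$, hence sizes at least $1,2,4$, giving spread $\ge 7g$; your ``minimum for consecutive exponents'' is only a heuristic for this).

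The genuine gap is in the classification step. You allow three dominant values ($3^m$, $2^n$, $2^{n-1}$), obtain at most $3+3+3=9$ terms, and are then left with an unfinished ``consistency analysis case by case'' that you only sketch (the discussion of $d=486$ and of two triple-shared $2$-parts does not cover all configurations, and the concluding sentence simply asserts the outcome).

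The paper avoids all of this with a one-line sharpening of the classification: from $a_7>\tfrac{5}{6}a_1$ together with
\[
3^{m-1}+2^{n-1}\;=\;\tfrac13\cdot 3^{m}+\tfrac12\cdot 2^{n}\;\le\;\tfrac{5}{6}\,a_1\;<\;a_7,
\]
one sees that no $a_i$ can have both $x_i\le m-1$ and $y_i\le n-1$. Hence every term satisfies $x_i=m$ or $y_i=n$; your putative $2^{n-1}$-dominant type never occurs among the seven terms. With only two types, pigeonhole forces at least four terms of one type, and your own four-powers spread bound ($\ge 7d$ for base~$2$, $\ge 9d$ for base~$3$) immediately contradicts the window $6d$. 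So the fix is to replace your dominant-summand split by the direct inequality above; the rest of your argument then completes the proof without any residual casework.
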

	\begin{proof} If $d<500$, since $m \ge 9 $, $${a_1} \ge {3^m} \ge {3^9} =19683.$$
		Thus $${a_7}={{a_1}-6d} \ge {{a_1}-3000} \ge {\frac{5}{6}{a_1}} \ \ (with ~\frac{{a_1}}{6} >3000).$$
		Thus ${a_1},{a_2},{a_3},{a_4},{a_5},{a_6},{a_7}$ is ${3^m+2^y}$ or ${2^n+3^x}$ type, this means that four of the numbers are of the same type.
		
		For a prime $p$ and a natural number $n$, we write $p||n$ if $p \mid n$ but $p^2 \nmid n$. We 
		denote the p-valuation of $n$ by ${\nu_p}(n)$: i.e., ${\nu_p}(n) = k$ if ${p^k}||n.$
		
		(1) If we have four ${3^m+2^y}$, let ${3^m+2^{p_1}}$, ${3^m+2^{p_2}}$, ${3^m+2^{p_3}}$, ${3^m+2^{p_4}}$ with $ {p_1} >{p_2} >{p_3}>{p_4} $. Since $${\nu_2}({2^{p_1}-2^{p_2}}) \ge 2+{\nu_2}({2^{p_3}-2^{p_4}}) \ge {2+{\nu_2}(d)},$$ 
		and $${\nu_2}({2^{p_2}-2^{p_3}}) \ge 1+{\nu_2}({2^{p_3}-2^{p_4}}) \ge {1+{\nu_2}(d)}.$$ 
		Thus $$ \frac{{2^{p_1}-2^{p_2}}}{d} \ge 4,  \frac{{2^{p_2}-2^{p_3}}}{d} \ge 2.$$ 
		Therefore $$(3^m+2^{p_1})-(3^m+2^{p_4}) \ge 7d.$$ This is a contradiction.
		
		(2) If we have four ${3^x+2^n}$, let ${3^{q_1}+2^n}$, ${3^{q_2}+2^n}$, ${3^{q_3}+2^n}$, ${3^{q_4}+2^n}$ with $ {q_1} >{q_2} >{q_3} >{q_4} $. Since $${\nu_3}({3^{q_1}-3^{q_2}}) \ge 2+{\nu_2}({3^{q_3}-3^{q_4}}) \ge {2+{\nu_3}(d)},$$ 
		thus $$ \frac{{3^{q_1}-3^{q_2}}}{d} \ge 9,$$ therefore $$(3^{q_1}+2^n)-(3^{q_2}+2^n) \ge 9d.$$ This is a contradiction.
	\end{proof}
	
	\begin{lemma} $2 \mid d$.
		\label{lem2.2}
	\end{lemma}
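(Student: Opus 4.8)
The plan is to show $2 \mid d$ by examining the arithmetic progression modulo a suitable small power of $2$ and deriving a contradiction if $d$ were odd. The key structural fact I would exploit is the same one used implicitly in Lemma~\ref{lem2.1}: each of the seven terms $a_1,\dots,a_7$ has the form $3^x+2^y$, and since $m\ge 9$ forces the terms to be large, the exponents $x$ and $y$ appearing are typically bounded below. First I would observe that if $d$ is odd, then the parities of consecutive terms alternate, so among $a_1,\dots,a_7$ exactly four are of one parity and three of the other. For a term $3^x+2^y$, its parity is governed by whether $y=0$ (giving an even$+$odd$=$odd value only when... ) — more precisely $3^x+2^y$ is odd exactly when $y=0$ (since $3^x$ is always odd and $2^y$ is even for $y\ge 1$, odd for $y=0$). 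Thus a term is even iff $y\ge 1$ and odd iff $y=0$.

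Therefore if $d$ is odd, four of the seven terms must have $y=0$, i.e.\ four terms are of the form $3^x+1$. I would then apply the valuation argument of Lemma~\ref{lem2.1}(2): among four terms $3^{q_1}+1,3^{q_2}+1,3^{q_3}+1,3^{q_4}+1$ with $q_1>q_2>q_3>q_4$, the differences are $3^{q_i}-3^{q_j}$, and using $\nu_3$ of these differences together with $\nu_3(d)$ one forces the gaps between the positions of these four terms to be large. Concretely, two terms of the form $3^q+1$ that are distinct terms of the progression differ by a multiple of $d$ that is divisible by a high power of $3$, and counting shows the four such terms cannot fit within a window of only seven consecutive progression entries. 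This reproduces the contradiction $(3^{q_1}+1)-(3^{q_2}+1)\ge 9d$ already derived in Lemma~\ref{lem2.1}(2), now available because the four odd terms force four terms of the pure $3^x+1$ shape.

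The step I expect to be the main obstacle is handling the boundary case where some of the four odd terms are the small exceptional values, or where the exponent $m\ge 9$ hypothesis interacts delicately with the requirement that four terms simultaneously have $y=0$ while still lying in an arithmetic progression; one must check that the largeness of $a_1$ (via $m\ge 9$) indeed forces the relevant $q_i$ to be large enough that the $\nu_3$ inequalities are valid and not degenerate. I would also need to rule out the alternative distribution of parities carefully: if instead four terms are even and three odd, then three terms have $y=0$ and four terms have $y\ge 1$, and for the even terms I would run the $\nu_2$ argument of Lemma~\ref{lem2.1}(1) to again contradict the seven-term window. Assembling these two sub-cases — four odd terms versus four even terms, each contradicted by the appropriate valuation count — completes the proof that $d$ cannot be odd, hence $2\mid d$.
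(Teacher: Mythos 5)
Your parity bookkeeping is inverted: since $3^x$ is always odd and $2^y$ is even precisely when $y\ge 1$, a term $3^x+2^y$ is \emph{odd} when $y\ge 1$ and \emph{even} exactly when $y=0$. This is not merely cosmetic, because it feeds your case split. If $d$ is odd the parities alternate, so the terms with $y=0$ (the even ones) sit at positions $1,3,5,7$ or at positions $2,4,6$; hence there are either four or three of them, and you do not get to choose which. Your first sub-case (four terms of the form $3^q+1$, killed by the $\nu_3$ count as in Lemma \ref{lem2.1}(2)) is sound. The genuine gap is the second sub-case, where only three terms have $y=0$: there you propose to run the $\nu_2$ argument of Lemma \ref{lem2.1}(1) on the remaining four terms, but that argument requires those four terms to share a common power of $3$, i.e.\ to be $3^m+2^{p_1},\dots,3^m+2^{p_4}$ with the \emph{same} exponent $m$, so that the powers of $3$ cancel in the differences. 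Four terms that merely have $y\ge 1$ carry arbitrary exponents $x_i$, their pairwise differences contain uncontrolled contributions $3^{x_i}-3^{x_j}$, and no $2$-adic valuation bound follows. This branch of your proof does not close.

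The repair is cheap and is what the paper does: you never need four terms of the form $3^S+1$; three suffice. Because the even terms occupy every other position of the progression, any three consecutive ones among them are \emph{equally spaced}, with both gaps equal to $2d$. Writing them as $1+3^{S_1}>1+3^{S_2}>1+3^{S_3}$, equality of the gaps forces $3^{S_1}-3^{S_2}=3^{S_2}-3^{S_3}$, yet $3^{S_1}-3^{S_2}\ge 2\cdot 3^{S_2}>3^{S_2}>3^{S_2}-3^{S_3}$, a contradiction. This disposes of both parity distributions at once and needs no valuation machinery; your four-term $\nu_3$ computation, while correct, is only available in the lucky case where four terms are even.
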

	
	\begin{proof}  If d is odd, then there must exist three even numbers and all are denoted as $3^x+2^y$. Since $2\mid 3^x+2^y$, $3^x$ is odd, thus $y=0$. Therefore, all three numbers are expressed in the form $3^x+1$. Thus, let ${1+3^{S_1}}, {1+3^{S_2}}, {1+3^{S_3}}$ where $ {S_1} > {S_2} >{S_3} $.
		
		Since ${S_1}> {S_2}$, thus $${1+3^{S_1}}-({1+3^{S_2}})={3^{S_1}-3^{S_2}} >3^{S_2}>{3^{S_2}-3^{S_3}}.$$ This is a contradiction.
	\end{proof}
	
	\begin{remark} If there exists ${y_i}=0$, then there must exist three even numbers. The situation is the same as the first proof, this is a contradiction.
		\label{rem2.1}
	\end{remark}

	\begin{lemma} $3 \mid d$.
		\label{lem2.3}
	\end{lemma}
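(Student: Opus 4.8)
The plan is to show $3\mid d$ by examining the residues of the seven terms modulo $3$. The key observation is that a number of the form $3^x+2^y$ reduces modulo $3$ in a controlled way: if $x\ge 1$ then $3^x+2^y\equiv 2^y\equiv(-1)^y\pmod 3$, so the term is congruent to $1$ or $2$ mod $3$ according to the parity of $y$; whereas if $x=0$ then the term is $1+2^y$, which is $\equiv 2\pmod 3$ when $y$ is even and $\equiv 0\pmod 3$ when $y$ is odd. So the residues available are $0,1,2$, but $0$ can only be achieved by terms with $x=0$.

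First I would record that the seven terms $a_1,\dots,a_7$ form an A.P. with common difference $d$, so their residues mod $3$ themselves form an arithmetic progression mod $3$. If $3\nmid d$, then as $i$ runs through the seven terms the residues $a_i\bmod 3$ cycle through all of $0,1,2$ with period $3$; among seven consecutive terms this forces the residue $0$ to appear at least twice (indeed it appears for at least $\lceil 7/3\rceil=3$ indices). By the preceding paragraph, any term divisible by $3$ must have $x=0$, hence be of the form $1+2^y$ with $y$ odd. So $3\nmid d$ would produce at least three terms of the shape $1+2^{y}$.

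The next step is to derive a contradiction from having three terms equal to $1+2^{s_1}$, $1+2^{s_2}$, $1+2^{s_3}$ with $s_1>s_2>s_3$, exactly mirroring the argument in Lemma~\ref{lem2.2}. The differences between consecutive such terms are $2^{s_i}-2^{s_j}$, and a $2$-adic valuation comparison (or simply the inequality $2^{s_1}-2^{s_2}>2^{s_2}>2^{s_2}-2^{s_3}$) shows the gaps between these three terms cannot be consistent with a common difference of a single A.P.; more precisely one shows the spacing forces the two gaps to be unequal in a way incompatible with their being multiples $kd$ and $\ell d$ for the correct index separations. This is essentially the same contradiction already used for the even case, now invoked for the residue-$0$ terms.

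The main obstacle I anticipate is bookkeeping rather than a deep idea: I must be careful that ``at least three terms divisible by $3$'' genuinely holds for \emph{every} choice of $a\bmod 3$ when $3\nmid d$, and that these three terms are truly distinct terms of the progression (so their pairwise gaps are honest positive multiples of $d$ with known index-differences). Once the three residue-$0$ terms are pinned down as $1+2^{s_i}$, the valuation/size contradiction is short, so the real work is the modular counting that guarantees their existence and controls which index-gaps occur.
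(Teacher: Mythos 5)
There is a genuine gap in the counting step. Among seven consecutive terms of an A.P.\ with $3\nmid d$, the residues mod $3$ cycle with period $3$, so one residue class (namely that of $a_1$, which recurs at indices $1,4,7$) appears three times and each of the other two classes appears only $\lfloor 7/3\rfloor=2$ times. Your bound ``at least $\lceil 7/3\rceil=3$'' is therefore wrong: the residue $0$ is guaranteed to occur only \emph{twice} unless $a_1\equiv 0\pmod 3$. In the two-occurrence case you have only two terms of the form $1+2^{s_1}$ and $1+2^{s_2}$, at indices differing by $3$, and the Lemma~\ref{lem2.2}--style contradiction (which compares two gaps among three terms via $2^{s_1}-2^{s_2}>2^{s_2}>2^{s_2}-2^{s_3}$) cannot be run. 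Having $2^{s_1}-2^{s_2}=3d$ with $3\nmid d$ is not by itself absurd (e.g.\ $s_1=s_2+2$ gives $d=2^{s_2}$), so the proof as written does not close. You flagged exactly this worry in your last paragraph, but it is not a bookkeeping issue --- it is the missing half of the proof. The gap is repairable: from $a_{j}\ge a_7+d\ge d+2$ one gets $2^{s_2}>d$, and then $3d=2^{s_2}(2^{s_1-s_2}-1)<3\cdot 2^{s_2}$ forces $s_1-s_2=1$, whence $3\mid 2^{s_2}$, absurd; but this argument is absent from the proposal.

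For comparison, the paper's proof takes a different route to locating the zero exponents: in each window of three consecutive terms it finds two $y$'s of equal parity, so $3\mid 2^{y_j}-2^{y_k}$ while $3\nmid a_j-a_k$, forcing some $x=0$; this yields two indices $i<j\le 6$ with $x_i=x_j=0$ and $j-i\le 3$, after which a size argument ($y_i=y_j+1$, $d\ge 2^{y_j-1}$, hence $a_7\le 1$) finishes. Your direct observation that any term divisible by $3$ must have $x=0$ is cleaner and pins down the positions of the zero-exponent terms more transparently, and your three-term case is correct; but the paper's delicate two-index analysis is precisely the case your proposal leaves unhandled.
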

	\begin{proof}
		If $3\nmid d$, thus we consider that ${a_i}=3^{x_i}+2^{y_i}$, ${a_{i+1}}=3^{x_{i+1}}+2^{y_{i+1}}$, ${a_{i+2}}=3^{x_{i+2}}+2^{y_{i+2}}$. 
		
		Thus $d={a_i}-{a_{i+1}}={a_{i+1}}-{a_{i+2}}$ not a multiple of 3, and $2d={a_i}-{a_{i+2}}$ not a multiple of 3.
		
		We consider the parity of ${y_i}$, ${y_{i+1}}$, ${y_{i+2}}$, where two numbers must have the same parity, thus their difference in powers of 2 must be a multiple of 3, it may be useful to set the subscripts to j and k, thus $3\mid(2^{y_j}-2^{y_k})$.
		
		However $3\nmid d$, thus $${a_j}-{a_k}=(3^{x_j}+2^{y_j})-(3^{x_k}+2^{y_k}) $$ not a multiple of 3, thus $3^{x_j}-3^{x_k} $ not a multiple of 3.
		
		This shows that ${x_j}=0$ or ${x_k}=0$, thereby at least one of the three numbers  ${x_i},{x_{i+1}},{x_{i+2}}$ is equal to 0, and $i=1,2,3,4,5$. Thus at least one of the three numbers  ${x_1},{x_2},{x_3}$ is equal to 0, and at least one of the three numbers  ${x_4},{x_5},{x_6}$ is equal to 0.     
		
		Thus at least two of the six numbers ${x_1},{x_2},{x_3},{x_4},{x_5},{x_6}$ are equal to 0, let $i,j$ such that ${x_i}={x_j}=0$ and $1 \le i < j \le 6$, $(i,j)$ is the smallest set of them. Then there must be \begin{equation}\label{2.1}
			j-i \le 3
		\end{equation}
		otherwise $j-i >3$, there must be $0$ in the three numbers ${x_{i+1}},{x_{i+2}},{x_{i+3}}$, which contradicts the minimality of $i,j$.
		
		Let ${a_i}=2^{y_i}+1, {a_j}=2^{y_j}+1$,
		since $${a_j} \ge {a_6} \ge {{a_7}+d}, {a_7}\ge {3^0}+{2^0}=2,$$
		thus ${a_j} \ge d+2$. Meanwhile from \eqref{2.1} we have ${a_i}-{a_j}=(j-i)d \le 3d$, thus $$2^{y_i}-2^{y_j} \le 3d \le 3({a_j}-2) =3\cdot(2^{y_j}+1-2)=3\cdot2^{y_j}-3,$$
		thus ${2^{y_i}} \le {4\cdot2^{y_j}-3}$, therefore ${y_i}< {y_j}+2$. Since ${y_i} >{y_j}$, we have ${y_i}={y_j}+1$, thus ${a_i}-{a_j}=2^{y_j}=(j-i)d $.
		Since $1 \le j-i \le 3 $, thus $$d=2^{y_j}~or~d=2^{{y_j}-1},$$ thereby 
		\begin{equation}\label{2.2}
			d \ge 2^{{y_j}-1},~j-i=1~or~2
		\end{equation}
		thus $j-i \ne 3$, therefore $j-i \le 2$. Since $i \le 3$, thus $j \le 5$. This implies that ${a_j}-2d \ge {a_7}$. From \eqref{2.2} we have  $${a_j}-2d \le (2^{y_j}+1)-2\cdot2^{{y_j}-1} =1,$$
		thus ${a_7} \le 1$. This is a contradiction.
	\end{proof}

	\subsection{The type of ${a_1}, \dots, {a_4}$}
	\label{section 2.2}
	
	In this section we discuss all the possible types of ${a_1}, \dots, {a_4}$. We show that there are four possible types for ${a_4}$, two possible types for ${a_1}$ and ${a_2}$, and three possible types for ${a_3}$.
	
	\begin{proposition} \label{prop2.1}
		${a_4}=a+3d$ can only be written as one of the four numbers  $${3^m+2^{y_4}},{3^{m-1}+2^{y_4}},{2^{n-1}+3^{x_4}},{2^n+3^{x_4}}.$$
		
	\end{proposition}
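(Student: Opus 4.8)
The plan is to fix an \emph{arbitrary} representation $a_4 = 3^{x_4}+2^{y_4}$ and show it must fall into one of the four listed shapes; this amounts to proving that either $x_4\in\{m-1,m\}$ or $y_4\in\{n-1,n\}$. Two halves of this are immediate from the definitions of $m$ and $n$. Since $a_4\le a_1<3^{m+1}$ we have $3^{x_4}<3^{m+1}$, hence $x_4\le m$; and since $a_4\le a_1<2^{n+1}$ we have $2^{y_4}<2^{n+1}$, hence $y_4\le n$. So the only thing left to exclude is the possibility that simultaneously $x_4\le m-2$ and $y_4\le n-2$.

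The engine of the argument is a lower bound on $a_4$. Because $a_7=a_1-6d$ is itself an element of the set it is positive (indeed $a_7\ge 2$), so $6d<a_1$, i.e. $d<a_1/6$. Consequently
$$a_4 = a_1 - 3d > a_1 - \frac{1}{2}a_1 = \frac{1}{2}a_1.$$
Combined with $3^m\le a_1$ and $2^n\le a_1$ (again from the definitions of $m$ and $n$), this single inequality controls how small the two prime-power parts of $a_4$ can be.

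Now I would split on the size of $x_4$. If $x_4\ge m-1$, then $x_4\in\{m-1,m\}$ and $a_4$ has one of the first two shapes. If instead $x_4\le m-2$, then $3^{x_4}\le 3^{m-2}=3^m/9\le a_1/9$, whence
$$2^{y_4} = a_4 - 3^{x_4} > \frac{1}{2}a_1 - \frac{1}{9}a_1 = \frac{7}{18}a_1 > \frac{1}{4}a_1 \ge \frac{1}{4}\cdot 2^n = 2^{n-2}.$$
Thus $2^{y_4}>2^{n-2}$, forcing $y_4\ge n-1$; together with $y_4\le n$ this yields $y_4\in\{n-1,n\}$, placing $a_4$ in one of the last two shapes. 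This exhausts all cases.

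The one point requiring genuine care is the numerical margin: the conclusion rests on the comparison $\frac{7}{18}>\frac{1}{4}$, which holds but with little room to spare. This is an instructive contrast with $a_2=a_1-d$ and $a_3=a_1-2d$, where the stronger lower bounds $a_2>\frac{5}{6}a_1$ and $a_3>\frac{2}{3}a_1$ make the analogous residual powers of $2$ exceed $\frac{1}{2}a_1$, forcing the power of $2$ to be exactly $2^n$ and hence giving fewer admissible shapes; for $a_4$ the margin has shrunk just enough to admit $y_4=n-1$ as well, which is precisely why $a_4$ carries a fourth type. I therefore expect the only real work to be the careful bookkeeping of these strict inequalities, rather than any conceptual obstacle; note in particular that neither $d\ge 500$ nor $m\ge 9$ nor the divisibility lemmas are needed here, only the positivity of $a_7$.
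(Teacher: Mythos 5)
Your proof is correct and follows essentially the same route as the paper: both arguments reduce the claim to excluding the simultaneous possibility $x_4\le m-2$ and $y_4\le n-2$, the paper via the single inequality $3^{m-2}+2^{n-2}<\tfrac{17}{36}(a+6d)<a+3d$ and you via the equivalent bound $a_4>\tfrac12 a_1$ followed by $2^{y_4}>\tfrac{7}{18}a_1>2^{n-2}$. The differences are only in bookkeeping, not in substance.
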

\begin{proof}
	 $$3^m \le {a_1}=a+6d < 2^{n+1}, 2^n \le  {a_1}=a+6d <3^{m+1}.$$
	Thus\begin{equation*}
		\begin{aligned}
			3^{m-2}+2^{n-2}&=\frac{1}{9}\cdot3^m+\frac{1}{4} \cdot2^n < \frac{1}{9} \cdot2^{n+1}+\frac{1}{4} \cdot2^n\\&=\frac{17}{36} \cdot2^n \le \frac{17}{36} (a+6d) <a+3d.
		\end{aligned}
	\end{equation*}
	Thus $a+3d=3^{x_4}+2^{y_4}$ where ${x_4} \ge m-1$ or ${y_4} \ge n-1$.
	Thus $a+3d$ has four possibilities: $${3^m+2^{y_4}},{3^{m-1}+2^{y_4}},{2^{n-1}+3^{x_4}},{2^n+3^{x_4}}.$$
\end{proof}

	\begin{proposition}\label{prop2.2}
		${a_1}=a+6d, {a_2}=a+5d$  must  be expressed as   $${3^m+2^y}~or~{2^n+3^x}.$$
		
	\end{proposition}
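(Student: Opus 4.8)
The plan is to imitate the proof of Proposition~\ref{prop2.1}, but to exploit that $a_1$ and $a_2$ are the two \emph{largest} terms, so the threshold against which we compare is $3^{m-1}+2^{n-1}$ rather than $3^{m-2}+2^{n-2}$. Writing $a_i=3^{x_i}+2^{y_i}$ for $i\in\{1,2\}$, the definitions $n=[\log_2 a_1]$ and $m=[\log_3 a_1]$ give $3^m\le a_1<3^{m+1}$ and $2^n\le a_1<2^{n+1}$. Since $a_i\le a_1<3^{m+1}$ and $a_i\le a_1<2^{n+1}$, each exponent satisfies $x_i\le m$ and $y_i\le n$. Thus proving that each $a_i$ equals $3^m+2^{y}$ or $2^n+3^{x}$ amounts to ruling out the simultaneous possibility $x_i\le m-1$ and $y_i\le n-1$.

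First I would record the comparison value. From $3^m\le a_1$ and $2^n\le a_1$ we get $3^{m-1}\le a_1/3$ and $2^{n-1}\le a_1/2$, hence
\begin{equation*}
	3^{m-1}+2^{n-1}\le \frac{a_1}{3}+\frac{a_1}{2}=\frac{5}{6}a_1.
\end{equation*}
So if $x_i\le m-1$ and $y_i\le n-1$ both held, then $a_i=3^{x_i}+2^{y_i}\le 3^{m-1}+2^{n-1}\le \frac{5}{6}a_1$.

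The key step is the matching strict lower bound $a_2>\frac{5}{6}a_1$. For this I would use only that every term is of the form $3^x+2^y$ with $x,y\ge 0$, so its smallest possible value is $3^0+2^0=2$; hence $a_7\ge 2$ and $6d=a_1-a_7\le a_1-2<a_1$, giving $d<a_1/6$. Therefore $a_2=a_1-d>a_1-\frac{1}{6}a_1=\frac{5}{6}a_1$, and a fortiori $a_1>\frac{5}{6}a_1$. Combining this with the previous paragraph, for $i\in\{1,2\}$ we obtain $a_i\ge a_2>\frac{5}{6}a_1\ge 3^{m-1}+2^{n-1}$, contradicting $a_i\le 3^{m-1}+2^{n-1}$. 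Hence the case $x_i\le m-1,\ y_i\le n-1$ cannot occur, and each of $a_1,a_2$ must be $3^m+2^{y}$ or $2^n+3^{x}$.

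I expect the only delicate point to be this strict lower bound on $a_2$: for $a_1$ the inequality $a_1>\frac{5}{6}a_1$ is trivial, but $a_2=a_1-d$ requires an upper bound on $d$, and the clean estimate $d<a_1/6$ comes precisely from the trivial observation $a_7\ge 2$ together with $a_1-a_7=6d$. Note that the lower bounds $d\ge 500$, $2\mid d$ and $3\mid d$ from Section~\ref{section 2.1} play no role here; only controlling $d$ from above matters. No difficulty arises from the two one-step-down estimates, which are exact; the argument works precisely because $a_2$ sits strictly above the barrier $\frac{5}{6}a_1$ that $3^{m-1}+2^{n-1}$ can never exceed.
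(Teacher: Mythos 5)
Your proposal is correct and follows essentially the same route as the paper: both reduce to the case $x_i\le m-1$, $y_i\le n-1$, bound $3^{m-1}+2^{n-1}\le\frac{1}{3}a_1+\frac{1}{2}a_1=\frac{5}{6}a_1$ using $3^m\le a_1$ and $2^n\le a_1$, and derive a contradiction from the positivity of $a=a_7$. Your reformulation via $d<a_1/6$ is just an algebraic rearrangement of the paper's inequality $a+5d\le\frac{5}{6}a+5d$.
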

	\begin{proof}
		If ${x_2} \ne m$ and ${y_2} \ne n$. Since ${x_2} \le m, {y_2} \le n$, thus $${x_2} \le m-1, {y_2} \le n-1.$$
		Therefore\begin{equation*}
			\begin{aligned}
				a+5d&=3^{x_2}+2^{y_2} \le 3^{m-1}+2^{n-1} =\frac{1}{3}\cdot3^m +\frac{1}{2}\cdot2^n \\ &\le \frac{1}{3}(a+6d)+\frac{1}{2}(a+6d) = \frac{5}{6}a+5d.
			\end{aligned}
		\end{equation*}
		The above inequality holds because we have $2^n \le a+6d, 3^m \le a+6d$. This is a contradiction. 
		
		Thus ${x_2}=m$ or ${y_2}=n$, thereby
		${a_2}=a+5d={3^m+2^{y_2}}$ or ${2^n+3^{x_2}}$.
		
		By the same token we can obtain ${x_1}=m$ or ${y_1}=n$, thereby
		${a_1}=a+6d={3^m+2^{y_1}}$ or ${2^n+3^{x_1}}$.
	\end{proof} 
	
	\begin{proposition}	\label{prop2.3}
		${a_3}=a+4d$  can only be written as one of the three numbers   $${3^m+2^y} , {2^n+3^x},{3^{m-1}+2^{n-1}}.$$
	\end{proposition}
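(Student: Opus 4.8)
The plan is to follow the template of Propositions~\ref{prop2.1} and~\ref{prop2.2}, sharpening the size bound just enough to collapse the ``intermediate'' forms into the single form $3^{m-1}+2^{n-1}$. First I would record the crude constraints on $a_3=a+4d$. Since $3^{x_3}\le a_3<a_1<3^{m+1}$ and $2^{y_3}\le a_3<a_1<2^{n+1}$, any representation $a_3=3^{x_3}+2^{y_3}$ satisfies $x_3\le m$ and $y_3\le n$. Moreover $a_3=a_4+d>a_4>3^{m-2}+2^{n-2}$, the last inequality being exactly the one established inside Proposition~\ref{prop2.1}; hence we cannot have both $x_3\le m-2$ and $y_3\le n-2$, so $x_3\ge m-1$ or $y_3\ge n-1$. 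If $x_3=m$ then $a_3$ is of the form $3^m+2^y$, and if $y_3=n$ then $a_3$ is of the form $2^n+3^x$; both are already among the three permitted types.

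It remains to treat the case $x_3\le m-1$, $y_3\le n-1$ with $x_3=m-1$ or $y_3=n-1$, where I want to force $a_3=3^{m-1}+2^{n-1}$. The key is to rule out the two neighbouring values $3^{m-1}+2^{n-2}$ and $3^{m-2}+2^{n-1}$ by a lower bound on $a_3$. Using $3^m\le a+6d$ and $2^n\le a+6d$, I would estimate
\[
3^{m-1}+2^{n-2}=\frac13\cdot 3^m+\frac14\cdot 2^n\le \frac{7}{12}(a+6d)
\]
and
\[
3^{m-2}+2^{n-1}=\frac19\cdot 3^m+\frac12\cdot 2^n\le \frac{11}{18}(a+6d),
\]
and then note that $\frac{7}{12}(a+6d)<a+4d$ and $\frac{11}{18}(a+6d)<a+4d$, each of which reduces to a trivially true inequality with a positive left-hand side. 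Thus $a_3>3^{m-1}+2^{n-2}$ and $a_3>3^{m-2}+2^{n-1}$.

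These two strict inequalities close the argument. If $x_3=m-1$, then $3^{m-1}+2^{y_3}=a_3>3^{m-1}+2^{n-2}$ gives $2^{y_3}>2^{n-2}$, so $y_3\ge n-1$, and with $y_3\le n-1$ we obtain $y_3=n-1$. Symmetrically, if $y_3=n-1$, then $3^{x_3}+2^{n-1}=a_3>3^{m-2}+2^{n-1}$ gives $x_3\ge m-1$, hence $x_3=m-1$. In either subcase $a_3=3^{m-1}+2^{n-1}$, the third permitted form, which completes the list.

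I expect no genuine obstacle here; the computations are elementary and parallel the earlier propositions. The one point deserving care is that the two multipliers $\frac{7}{12}$ and $\frac{11}{18}$ must stay below the least possible value of the ratio $(a+4d)/(a+6d)$. Since that ratio increases in $a$ and is at least $\frac23$ (its limiting value as $a\to 0$, while in fact $a=a_7\ge 2$), and since $\frac{7}{12}<\frac{11}{18}<\frac23$, both estimates hold for every admissible pair $(a,d)$. This is the same separation mechanism used in Propositions~\ref{prop2.1} and~\ref{prop2.2}, now tuned to isolate $3^{m-1}+2^{n-1}$ from its two neighbours.
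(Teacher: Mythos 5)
Your proof is correct and follows essentially the same route as the paper: both arguments bound $3^{m-1}+2^{n-2}$ and $3^{m-2}+2^{n-1}$ above by $\frac{7}{12}(a+6d)$ and $\frac{11}{18}(a+6d)$ respectively, observe these are less than $a+4d$, and conclude that any representation with $x_3\le m-1$ and $y_3\le n-1$ must be exactly $3^{m-1}+2^{n-1}$. If anything, your write-up makes the final case analysis (how the two strict inequalities pin down $y_3=n-1$ and $x_3=m-1$) more explicit than the paper does.
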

	\begin{proof}
		we consider that 
		\begin{equation*}
			\begin{aligned}
				2^{n-1}+3^{m-2}&= \frac{1}{2}\cdot2^n+ \frac{1}{9}\cdot3^m  \le \frac{1}{2}\cdot(a+6d)+ \frac{1}{9}\cdot(a+6d)\\
				&=\frac{11}{18}\cdot a+ \frac{11}{3}\cdot d <\frac{11}{18}\cdot a +4d <a+4d,
			\end{aligned}
		\end{equation*}
		
		\begin{equation*}
			\begin{aligned}
				2^{n-2}+3^{m-1}&= \frac{1}{4}\cdot2^n+ \frac{1}{3}\cdot3^m  \le \frac{1}{4}\cdot(a+6d)+ \frac{1}{3}\cdot(a+6d)\\&=\frac{7}{12}\cdot a+ \frac{7}{2}\cdot d <\frac{7}{12}\cdot a+4d <a+4d.
			\end{aligned}
		\end{equation*}
		
		Thus, $a+4d\ge 2^{n-1}+3^{m-2}$, and $a+4d\ge 2^{n-2}+3^{m-1}$, then ${a_3}=a+4d$  can only be written as one of the three numbers   $${3^m+2^y} , {2^n+3^x},{3^{m-1}+2^{n-1}}.$$
	\end{proof}
	
	\subsection{${a_1}, \dots, {a_4}$ Specific discussion}
	\label{section 2.3}
	
	In this section, for the four numbers ${3^m+2^{y_i}}$, ${3^{x_i}+2^n}$, ${3^{m-1}+2^{y_i}}$ and ${3^{x_i}+2^{n-1}}$, we will call them $3$-dominated, $2$-dominated, $3$-weak dominated and $2$-weak dominated, respectively.
	
	Below we first prove that there cannot be three $3$-dominated or three $2$-dominated or two $3$-dominated and two $2$-dominated in ${a_1}, \dots, {a_4}$. Then we prove that
	${a_3}\neq 3^{m-1}+2^{n-1}$, ${a_4} \neq {2^{n-1}+3^{x_4}}$ and ${a_4} \neq {3^{m-1}+2^{y_4}}$. In summary, it can be shown that all cases of ${a_1}, \dots, {a_4}$ are contradictory.

	\begin{prop} \label{prop2.4}
		There can be no three numbers in ${a_1}, \dots, {a_4}$ expressed as $3$-dominated.
	\end{prop}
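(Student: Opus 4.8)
The plan is to suppose, for contradiction, that three of the four numbers $a_1,a_2,a_3,a_4$ are $3$-dominated, i.e.\ of the form $3^m+2^{y}$ with the \emph{same} exponent $m$. Write these three as $3^m+2^{y_i}$, $3^m+2^{y_j}$, $3^m+2^{y_k}$ with indices $i<j<k$ drawn from $\{1,2,3,4\}$ and corresponding values in strictly decreasing arithmetic order, so that the common difference $d$ controls their gaps: each consecutive pair among them differs by a multiple of $d$ between $d$ and $3d$. The crucial observation is that, since the shared term $3^m$ cancels, the differences of these three numbers are \emph{purely differences of powers of $2$}, namely $2^{y_i}-2^{y_j}$ and $2^{y_j}-2^{y_k}$ (taking $y_i>y_j>y_k$).

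First I would exploit the $2$-adic valuation exactly as in the proof of Lemma~\ref{lem2.1}(1). Since $2\mid d$ by Lemma~\ref{lem2.2}, writing $s=\nu_2(d)\ge 1$, the gap between two $3$-dominated terms must be divisible by $2^{s}$, and a direct valuation count shows $\nu_2(2^{y_i}-2^{y_j})=y_j$ and $\nu_2(2^{y_j}-2^{y_k})=y_k$, forcing $y_k\ge s$ and $y_j>y_k\ge s$. The key inequality to extract is a lower bound on the ratio $(2^{y_i}-2^{y_k})/d$: because the two inner gaps are distinct powers-of-$2$ differences with $y_j>y_k$, the total spread $2^{y_i}-2^{y_k}=(3^m+2^{y_i})-(3^m+2^{y_k})$ is too large to fit inside the allowed window of at most $3d$ (the indices $i,k$ differ by at most $3$, so their values differ by at most $3d$). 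Concretely I would show $2^{y_i}-2^{y_k}\ge 2\cdot 2^{y_k}\cdot(\text{something})$ and compare against $3d$, driving a contradiction of the same flavor as the "$\ge 7d$'' contradiction in Lemma~\ref{lem2.1}.

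The main obstacle is that here the three $3$-dominated numbers need \emph{not} be consecutive in the progression, so the index gaps $j-i$ and $k-j$ can each be $1$ or $2$ (with $k-i\le 3$), and one cannot simply assert the gaps are $d$ and $2d$. I would therefore split into the few admissible index patterns for $(i,j,k)$ within $\{1,2,3,4\}$ — essentially $(1,2,3)$, $(1,2,4)$, $(1,3,4)$, $(2,3,4)$ — and in each case combine the valuation constraint $2^s\mid(y$-gap$)$ with the size constraint (gap $\le 3d$) to pin down the possible $y$-exponents. The heart of the argument is that $2^{y_i}-2^{y_j}$ being a multiple of $d$ forces $2^{y_j}\mid d$-type divisibility, so the consecutive $y$-differences are rigid; once $y_i,y_j,y_k$ are within two of each other the three values $2^{y_i},2^{y_j},2^{y_k}$ cannot form (a sub-progression of) an arithmetic progression with common difference $d$, since powers of $2$ with nearby exponents grow geometrically and overshoot any linear spacing.

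Finally I would observe that the conclusion is symmetric under swapping the roles of $2$ and $3$: the identical argument, using $\nu_3$ and the evenness/divisibility facts ($3\mid d$ from Lemma~\ref{lem2.3}), handles the mirror case and will be reused verbatim in the companion proposition ruling out three $2$-dominated numbers. So the cleanest presentation records the $3$-dominated case in full and then notes that the $2$-dominated and mixed cases follow by the same valuation-plus-size estimate, which is exactly what the subsection's roadmap anticipates.
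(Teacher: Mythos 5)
There is a genuine gap, and it sits exactly at the point you flag as the ``main obstacle.'' With only three $3$-dominated terms the valuation argument of Lemma~\ref{lem2.1}(1) gives $2^{p}-2^{q}\ge 2\left(2^{q}-2^{r}\right)\ge 2d$, hence a total spread of at least $3d$ --- but the allowed window for three indices inside $\{1,2,3,4\}$ is exactly $3d$, so size and valuation alone produce no contradiction. Equality is attainable: taking $p=r+2$, $q=r+1$ and $d=2^{r}$ gives gaps of exactly $2d$ and $d$, so the three numbers sit perfectly at positions $a_1,a_3,a_4$. Your closing claim that ``powers of $2$ with nearby exponents grow geometrically and overshoot any linear spacing'' is false in precisely this configuration, and no case split over index patterns will rescue it. The paper's proof must therefore go further: it pins down this unique surviving configuration, observes that $a_5=a_4-d=\left(3^m+2^{r}\right)-2^{r}=3^m$, deduces $x_5=0$ and hence $2^{y_5}+3^1=3^m+2^1$, and kills this duplicate representation by invoking Proposition~\ref{prop 1.1} for $m\ge 6$. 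That appeal to the five-exceptions theorem is the essential idea your proposal is missing.

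A secondary inaccuracy: the symmetry you invoke in the last paragraph does not hold. The $2$-dominated case (Proposition~\ref{prop 2.5}) is genuinely easier, because $3^{p}-3^{q}\ge 3\left(3^{q}-3^{r}\right)$ pushes the spread to strictly more than $3d$ and yields an immediate contradiction; the factor $3$ versus $2$ is exactly why the base-$2$ case has a borderline configuration and the base-$3$ case does not. Treating the two as ``identical arguments'' hides the one case where real work is needed.
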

	\begin{proof}
		If there are three $3$-dominated in ${a_1}, \dots, {a_4}$, denoted  ${3^m+2^p}, {3^m+2^q}$, ${3^m+2^r}$ where $p> q > r$. Then,  $$d\mid({3^m+2^p})-({3^m+2^q}) \le 2d,$$ $$d\mid({3^m+2^q})-({3^m+2^r}) \geq d,$$ thus $${2^p-2^q }> {2^q-2^r}\ge d ,$$ $${2^p-2^q }> {2^q-2^r} = ({3^m+2^q})-({3^m+2^r}) \equiv 0~(\mod~\mathrm{d}).$$ Therefore$$d=({3^m+2^q})-({3^m+2^r})={2^q-2^r},$$
		$$2d=({3^m+2^p})-({3^m+2^q} )={2^p-2^q}={2^{q+1}-2^{r+1}}.$$From this, we can see $$3^m+2^p=a_1, 3^m+2^q=a_3, 3^m+2^r=a_4.$$ Thus $$p=q+1=r+2, d={2^q-2^r}={2^{r+1}-2^r}=2^r.$$
		
		If ${a_5}=3^{x_5}+2^{y_5}={a_4}-d={({3^m+2^r})-2^r}=3^m \equiv 0\left(\mod~\mathrm{3}\right) $, thus ${x_5}=0$. Thus $$3^{x_5}+2^{y_5}=3^0+2^{y_5}=1+2^{y_5}=3^m,$$
		thus \begin{equation}\label{2.3}
			2^{y_5}+3^1=3^m+2^1
		\end{equation}

		According to proposition \ref{prop 1.1}, \eqref{2.3} is no solution if $m \ge 6$. This is a contradiction.
		
	\end{proof}

	\begin{proposition} There can be no three numbers in ${a_1}, \dots, {a_4}$ expressed as $2$-dominated.
		\label{prop 2.5}
	\end{proposition}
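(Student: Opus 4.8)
The plan is to mirror the structure of the preceding Proposition~\ref{prop2.4} for the $3$-dominated case, exploiting the symmetry between powers of $2$ and powers of $3$. Suppose for contradiction that three of ${a_1},\dots,{a_4}$ are $2$-dominated, say ${2^n+3^p},{2^n+3^q},{2^n+3^r}$ with $p>q>r$. First I would use the divisibility $d\mid 2d$ forced by the arithmetic progression together with the $3$-valuation to pin down the gaps. The key observation is that ${\nu_3}(3^q-3^r)={\nu_3}(3^p-3^q)=\min\{p,q,r\}$-type estimates give ${\nu_3}$ jumps that are far too large: specifically $3^q-3^r\ge d$ and $3^p-3^q\ge 3(3^q-3^r)\ge 3d$ when $p-q$ and $q-r$ are both positive, so the spread $({2^n+3^p})-({2^n+3^r})$ exceeds $3d$, forcing the three terms to occupy positions within $\{a_1,a_2,a_3,a_4\}$ that are incompatible with a common difference $d$.

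More concretely, I would argue that the only way three $2$-dominated numbers can fit inside four consecutive progression terms is ${2^n+3^p}=a_1$, ${2^n+3^q}=a_3$, ${2^n+3^r}=a_4$, exactly as in the $3$-dominated case. This pins down $d=3^q-3^r$ and $2d=3^p-3^q$. Dividing, I would get $\frac{3^p-3^q}{3^q-3^r}=2$, i.e. $3^q(3^{p-q}-1)=2\cdot3^r(3^{q-r}-1)$. Comparing $3$-adic valuations on both sides shows $q=r$ is impossible and forces $r=q$ up to the factor; the cleanest route is to note $3^{p-q}-1$ and $3^{q-r}-1$ are both coprime to $3$, so $\nu_3$ of the left side is $q$ and of the right side is $r$, giving $q=r$, a contradiction with $q>r$. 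This is the crucial structural departure from the $2$-dominated analysis in Proposition~\ref{prop2.4}: there $d$ turned out to be a power of $2$, whereas here the factor-of-$2$ relation $2d=3^p-3^q$ together with $d=3^q-3^r$ simply has no solution, because a difference of powers of $3$ can never equal twice another difference of powers of $3$ except in degenerate cases ruled out by valuation.

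The main obstacle I anticipate is handling the boundary cases cleanly—in particular confirming that no alternative placement of the three $2$-dominated terms among $a_1,\dots,a_4$ (for instance at positions $a_1,a_2,a_4$ or $a_2,a_3,a_4$) survives the valuation constraint, since each placement forces a different ratio between the two gaps. For each placement I would write the corresponding equations $\alpha d=3^q-3^r$ and $\beta d=3^p-3^q$ with $\alpha,\beta\in\{1,2,3\}$, reduce to $\frac{3^{p-q}-1}{3^{q-r}-1}\cdot 3^{q-r}=\beta/\alpha$, and check via $3$-adic valuation that no integer exponents satisfy the resulting equation. I expect every such sub-case to collapse because the left-hand side is always divisible by $3^{q-r}\ge 3$ while $\beta/\alpha$ contributes no factor of $3$, giving the contradiction uniformly. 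If any residual case escapes the valuation argument, the fallback is to invoke Proposition~\ref{prop 1.1} as in the preceding proof, reducing the surviving relation to one of the five exceptional representations and excluding it by the hypothesis $m\ge 9$.
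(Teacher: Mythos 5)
Your core argument does go through, but by a genuinely different and much longer route than the paper's. The paper disposes of this case in one line with a pure size estimate: the three terms lie among $a_1,\dots,a_4$, so their total spread is at most $3d$, yet
$$3d \ \ge\ 3^p-3^r \ \ge\ 3^{q+1}-3^r \ >\ 3(3^q-3^r) \ \ge\ 3d,$$
using only $p\ge q+1$ and the fact that $3^q-3^r$ is a positive multiple of $d$. You instead place the three terms at positions with gaps $\beta d$ and $\alpha d$ (where $\alpha,\beta\ge 1$ and $\alpha+\beta\le 3$) and compare $3$-adic valuations in $\alpha\cdot 3^q(3^{p-q}-1)=\beta\cdot 3^r(3^{q-r}-1)$: since $\alpha,\beta\in\{1,2\}$ carry no factor of $3$ and $3^{p-q}-1$, $3^{q-r}-1$ are coprime to $3$, this forces $q=r$, a contradiction. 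That valuation argument is airtight and, applied uniformly to every placement as in your third paragraph, constitutes a complete proof on its own. Two cautions, though. First, the inequality $3^p-3^q\ge 3(3^q-3^r)$ asserted in your opening paragraph is false in general (take $p=3$, $q=2$, $r=0$: $18<24$), and the accompanying claim that $\nu_3(3^q-3^r)=\nu_3(3^p-3^q)$ is also wrong (these valuations are $r$ and $q$ respectively); the correct size chain must compare $3^p-3^r$, not $3^p-3^q$, with $3(3^q-3^r)$. Second, your assertion that the only admissible placement is $a_1,a_3,a_4$ does not follow from anything you established and is not needed: the uniform treatment of all $(\alpha,\beta)$ is what actually closes the proof, and no appeal to Proposition \ref{prop 1.1} is required here. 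The paper's approach buys brevity; yours buys a template that would survive even if the three terms were spread over more than four consecutive positions, where the crude size bound might fail.
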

	\begin{proof}
		If there are three $2$-dominated in ${a_1}, \dots, {a_4}$, denoted ${2^n+3^p}$, ${2^n+3^q}$, ${2^n+3^r}$ where $p> q > r$,
		thus $$3d \ge (2^n+3^p)- (2^n+3^r) =3^p-3^r \ge 3^{q+1}-3^r >3(3^q-3^r) \ge 3d.$$ This is a contradiction.
	\end{proof} 
	
	\begin{proposition} There cannot be represented as exactly two $2$-dominated and two $3$-dominated. 
		\label{prop 2.6} 
	\end{proposition}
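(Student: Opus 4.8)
The plan is to assume, for contradiction, that exactly two of $a_1,\dots,a_4$ are $3$-dominated and two are $2$-dominated, and to run an analysis parallel to Propositions \ref{prop2.4} and \ref{prop 2.5}. Write the two $3$-dominated terms as $3^m+2^p,\,3^m+2^q$ with $p>q$ and the two $2$-dominated terms as $2^n+3^s,\,2^n+3^t$ with $s>t$. If the two $3$-dominated terms sit at positions $i_1<i_2$ and the two $2$-dominated terms at positions $j_1<j_2$ inside $\{1,2,3,4\}$, then $2^p-2^q=(i_2-i_1)d$ and $3^s-3^t=(j_2-j_1)d$. Writing $\delta_2=i_2-i_1$ and $\delta_3=j_2-j_1$, the six ways of splitting $\{1,2,3,4\}$ into an ordered pair of pairs collapse to exactly four admissible gap patterns $(\delta_2,\delta_3)\in\{(1,1),(2,2),(3,1),(1,3)\}$. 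A first universal observation is that $p-q$ is even: since $3\mid d$ by Lemma \ref{lem2.3} and $d\mid 2^p-2^q=2^q(2^{p-q}-1)$, we get $3\mid 2^{p-q}-1$, and because $2$ has order $2$ modulo $3$ this forces $2\mid p-q$; in particular $p-q\ge2$ and $2^{p-q}-1\ge3$.

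Next I would eliminate $d$. In every pattern the two relations combine into a single Pillai-type equation: $2^p-2^q=3^s-3^t$ for $(1,1)$ and $(2,2)$, $2^p-2^q=3(3^s-3^t)$ for $(3,1)$, and $3(2^p-2^q)=3^s-3^t$ for $(1,3)$. The layer $p-q=2$ is then disposed of uniformly. Using $2^{p-q}-1=3$ one finds in each pattern that $d$ is forced to be a single prime power times a bounded constant, and comparing $3$-adic valuations reduces the $3$-side to $3^{a}=2^{b}+1$ for suitable $a,b$ (after stripping the common factor of $3$). Since the only solutions of $3^a=2^b+1$ are $(a,b)=(1,1)$ and $(2,3)$ (a classical elementary fact, via reduction modulo $8$), each pattern yields $d\le 24$, contradicting Lemma \ref{lem2.1}; Remark \ref{rem2.1} also guarantees $q\ge1$, so no degenerate $3$-dominated exponent intervenes.

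The genuinely hard case, and the \emph{main obstacle}, is $p-q\ge4$. Here $2^{p-q}-1\ge15$, so from $2^q(2^{p-q}-1)=\delta_2 d\le 3d$ we only get $2^q\le d/5$, and the Pillai equations above do possess solutions with $p-q\ge4$ (for instance $2^8-2^4=3^5-3^1=240$), so size alone is insufficient. My plan is to exploit the full progression rather than the isolated equation. First, the lifting-the-exponent identities $\nu_2(3^j-1)\in\{1\}\cup\{3,4,\dots\}$ and $\nu_3(2^{2k}-1)=1+\nu_3(k)$ tie the valuations $\nu_2(d)$, $\nu_3(d)$, $q$ and $t$ to the parities and $2$-/$3$-adic sizes of $p-q$ and $s-t$, eliminating several sub-cases outright (e.g.\ the impossible value $\nu_2(3^j-1)=2$). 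Second, pairing one $3$-dominated with one $2$-dominated term fixes $3^m-2^n$ exactly, namely $3^m-2^n=3^s-2^p+(j_1-i_1)d$; combined with $3^m\le a_1<3^{m+1}$, $2^n\le a_1<2^{n+1}$, the span bound $6d=a_1-a_7<a_1$ (so $d<a_1/6$), and $m\ge9$, this pins the leading powers tightly. Finally, as in the proof of Proposition \ref{prop2.4}, I would feed the resulting data into a neighbouring term ($a_5=a_4-d$ or $a_1+d$) to manufacture a second representation of a single integer and invoke Proposition \ref{prop 1.1} to kill it for $m\ge9$. I expect the bookkeeping here—keeping the four admissible patterns, the parity and valuation constraints, and the size bounds simultaneously consistent—to be the most delicate part; everything else reduces either to $3^a=2^b+1$ or to a direct comparison against $d\ge500$.

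Together with Propositions \ref{prop2.4} and \ref{prop 2.5}, this shows that $a_1,\dots,a_4$ cannot all be (strongly) dominated, forcing at least one of them to be weak-dominated and handing the analysis over to the cases treated in the remainder of the section.
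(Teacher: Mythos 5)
Your reduction to the four gap patterns $(\delta_2,\delta_3)\in\{(1,1),(2,2),(3,1),(1,3)\}$ and to the single equation $2^p-2^q=3^{s'}-3^{t'}$ (with the factor of $3$ absorbed into the exponents in the last two patterns) is exactly the paper's decomposition, and your treatment of the layer $p-q=2$ via $3^a=2^b+1$ is correct as far as it goes. But the proof is not complete: for $p-q\ge4$ you offer only a plan (valuation bookkeeping, pinning down $3^m-2^n$, manufacturing a second representation from a neighbouring term), and you yourself flag it as the delicate, unfinished part. That is a genuine gap --- the main case of the proposition is left unproved.

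The missing idea is that Proposition \ref{prop 1.1} already disposes of \emph{every} case, including $p-q\ge4$, in one line. Each of your Pillai-type identities $2^p-2^q=3^{s'}-3^{t'}$ rearranges to $2^p+3^{t'}=2^q+3^{s'}$, a duplicate representation of a single integer as a power of $2$ plus a power of $3$. By Proposition \ref{prop 1.1} that integer is one of $5,11,17,35,259$, so the common difference $2^p-2^q=3^{s'}-3^{t'}$ is at most $2^8-2^4=240$. Since this difference equals $d$, $2d$ or $3d$ according to the pattern, we get $d\le240<500$, contradicting Lemma \ref{lem2.1}. (Your example $2^8-2^4=3^5-3^1=240$, which you cite as evidence that ``size alone is insufficient,'' is precisely the largest entry on Iannucci's list and still lies below the $d\ge500$ threshold.) This is the paper's entire argument in all four cases; no lifting-the-exponent analysis, no auxiliary representation via $a_5$, and no separate treatment of $p-q=2$ are needed.
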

	\begin{proof}
		If ${a_1}, \dots, {a_4}$ is the arrangement of ${2^n+3^x}, {2^n+3^y},{3^m+2^z}$ and ${3^m+2^w}$, without loss of generality, we assume $x>y$ and $z>w$.
		
		(1) If ${a_1},{a_2}$ are the same type, then ${a_3},{a_4}$ are the same type, thus $$d= {3^x-3^y}={2^z-2^w},$$ thereby $d< 500.$ This is a contradiction.
		
		(2) If ${a_1},{a_3}$ are the same type, then ${a_2},{a_4}$ are the same type. thus $$2d= {3^x-3^y}={2^z-2^w},$$ thereby $d< 500.$ This is a contradiction.
		
		(3) If ${a_1}={2^n+3^x},{a_4}={2^n+3^y}$. Thus ${3d={a_1}-{a_4}=3^x-3^y}$ where $y>1$, and $${a_2}-{a_3}=d=({3^m+2^z})- ({3^m+2^w})={2^z}-{2^w}=3^{x-1}-3^{y-1},$$ thereby  $d< 500$. This is a contradiction.
		
		(4) If ${a_1}={3^m+2^z},{a_4}={3^m+2^w}$. Thus ${3d={a_1}-{a_4}=2^z-2^w}$, and ${d={a_2}-{a_3}=3^x-3^y} $. Thus $$3d=2^z-2^w=3^{x+1}-3^{y+1},$$ thereby $d< 500$. This is a contradiction.
		
		Combining Proposition \ref{prop2.4}, \ref{prop 2.5} and \ref{prop 2.6}, ${a_1}, \dots, {a_4}$ can't be all $3$-dominated or $2$-dominated.
	\end{proof}

	\begin{proposition}
		${a_3}\neq 3^{m-1}+2^{n-1}$.
		\label{prop2.7}
	\end{proposition}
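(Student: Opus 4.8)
The plan is to assume, for contradiction, that $a_3 = 3^{m-1} + 2^{n-1}$ and to exploit the fact that this ``balanced'' representation forces strong congruence information on the whole progression. First I would record the cheap consequences. Since $3^{m-1}$ is odd and $2^{n-1}$ is even, $a_3$ is odd; because $6 \mid d$ (combining Lemmas \ref{lem2.2} and \ref{lem2.3}), every $a_i \equiv a_3 \pmod 6$, so each $a_i$ is odd and each $a_i \equiv 2^{n-1} \pmod 3$. Oddness gives $y_i \ge 1$ for all $i$ (as in Remark \ref{rem2.1}), and $m - 1 \ge 8$, $n - 1 \ge 13$ since $m \ge 9$ and $a_1 \ge 3^9 > 2^{13}$.

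The core of the argument is a finite case split driven by Proposition \ref{prop2.2}: each of $a_1 = a_3 + 2d$ and $a_2 = a_3 + d$ is either $3$-dominated ($3^m + 2^{\bullet}$) or $2$-dominated ($2^n + 3^{\bullet}$), giving four type-combinations. In each combination I would use the two identities $a_2 - a_3 = d$ and $a_1 - a_2 = d$ to eliminate $d$ and collapse everything into a single relation among the powers $3^m, 3^{m-1}, 2^n, 2^{n-1}$ and the free exponents. For instance, two $3$-dominated terms give $2 \cdot 3^{m-1} = 2^{y_1} + 2^{n-1} - 2^{y_2 + 1}$, whose left side is $\equiv 2 \pmod 4$ while (using $y_i \ge 1$ and $n - 1 \ge 2$) the right side is $\equiv 0 \pmod 4$ — an immediate contradiction. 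The two ``mixed'' combinations collapse similarly: reducing modulo $3$ (where $3^m, 3^{m-1}$ and any $3^{x}$ with $x \ge 1$ vanish) forces a power of $3$ to carry exponent $0$, and then reduction modulo $4$ together with the elementary facts ``$2^a = 3^b$ only for $a = b = 0$'' and ``$3^t - 1 = 2^s$ only for $t \le 2$'' finishes them; here $m - 1 \ge 8$ is exactly what excludes the sporadic small solutions.

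The delicate part is the residual degenerate sub-cases produced by the modulo-$3$ step, namely those in which some term is of the shape $2^{\bullet} + 1$ (an exponent of $3$ equal to $0$); there the modular obstruction evaporates. To close these I would bring in the fourth term $a_4 = a_3 - d$, whose shape is pinned down by Proposition \ref{prop2.1} to one of four forms. Computing $a_4$ explicitly from the case data and testing it against each allowed form yields extra relations killed by size comparison (e.g. $a_4 < 2^n$ rules out $a_4 = 2^n + 3^{x_4}$) and by the same exponential facts; for example, in the ``both $2$-dominated'' combination the degenerate branch forces $a_4 = 2\cdot 3^{m-1} - 1$, and matching this against $3^{m-1} + 2^{y_4}$ would require $3^{m-1} - 1$ to be a power of $2$, impossible for $m - 1 \ge 8$. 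Where a relation instead rearranges into a single integer carrying two representations as $3^a + 2^b$, Proposition \ref{prop 1.1} supplies the contradiction, again because $m \ge 9$ puts us past all five exceptional integers.

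I expect the main obstacle to be precisely these degenerate $x_i = 0$ branches: the clean mod $3$/mod $4$ contradiction is unavailable, so each must be run down individually through $a_4$, and one must check that every auxiliary exponential equation ($2^a = 3^b$, $3^t - 1 = 2^s$, and the two-representation equations) has only small solutions, all excluded by the standing hypothesis $m \ge 9$. Organizing these branches so that Propositions \ref{prop2.1} and \ref{prop 1.1} dispatch them uniformly, rather than ad hoc, is the part that needs the most care.
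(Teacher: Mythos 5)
Your overall strategy coincides with the paper's: assume $a_3=3^{m-1}+2^{n-1}$, invoke $6\mid d$ and Proposition \ref{prop2.2} to restrict $a_1,a_2$ to the $3$-dominated/$2$-dominated types, and kill each combination by reduction modulo $3$ and modulo small powers of $2$. Your dispatch of the two ``unmixed'' combinations is correct (for two $3$-dominated terms you should note that $y_1>y_2\ge 1$ forces $y_1\ge 2$, which is what makes the right-hand side $\equiv 0 \pmod 4$), and your observation that the mod $3$ step in the mixed combinations forces an exponent of $3$ to vanish is exactly the paper's first move.

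The genuine gap is in the degenerate branches you yourself flag as delicate, and your proposed toolkit does not close them. Take the branch $a_1=2^n+1$, $a_2=3^m+2^{y_2}$, $a_3=3^{m-1}+2^{n-1}$ (the paper's Case (2)). The identity $a_1+a_3=2a_2$ here is a four-term exponential relation
\begin{equation*}
3\cdot 2^{n-1}+1=5\cdot 3^{m-1}+2^{y_2+1},
\end{equation*}
which is not of any of the shapes you list ($2^a=3^b$, $3^t-1=2^s$, or a double representation covered by Proposition \ref{prop 1.1}), and routing through $a_4$ produces expressions like $4\cdot 3^{m-1}-2^{n-1}+2^{y_2}-1$ that are no easier to match against the four allowed forms. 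The paper needs two further ideas you do not supply: first a $2$-adic bootstrap ($4\mid 2d$ forces $4\mid 3^{m-1}-1$, hence $2\mid m-1$, hence $8\mid 3^{m-1}-1$, hence $4\mid d$, which pins $y_2=1$); and then, for the resulting equation $5\cdot 3^{m-2}=2^{n-1}-1$, a multiplicative-order argument ($m\ge 6$ gives $9\mid 2^{n-1}-1$, so $6\mid n-1$, so $63\mid 2^{n-1}-1$, so $7\mid 5\cdot 3^{m-2}$, absurd). Neither step is a size comparison nor an appeal to Proposition \ref{prop 1.1}, so as written your plan would stall precisely at the sub-case that carries the real content of the proposition.
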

	\begin{proof}
		According to a lemma \ref{lem2.1}, \ref{lem2.2}, \ref{lem2.3}, we obtain that: ${6\mid d}$, $d\ge 500$, 
		$y_i$ is a positive integer for all $a+(7-i)d=3^{x_i}+2^{y_i}$.

		It has already been demonstrated that
		${a_1},{a_2}$ is $3$-dominated or $2$-dominated type, ${a_3}$ can only be $3$-dominated, $2$-dominated, ${3^{m-1}+2^{n-1}}$ and ${a_4}$ can only be $3$-dominated, $2$-dominated, $3$-weak dominated, $2$-weak dominated.
		
		If ${a_3}=3^{m-1}+2^{n-1}$, the possibility of ${a_1}$ is discussed below.
		
		(1) If ${a_1}=3^m+2^{y_1}$, we have $$4\mid 2d={a_1}-{a_3}=2\cdot3^{m-1}+2^{y_1}-2^{n-1}.$$ Thus ${y_1}=1, {a_1}=3^m+2$. Applying Proposition \ref{prop2.2} and the fact that $6\mid d$, we obtain that ${a_2}=3^{x_2}+2^n$, thus $$3\mid d={a_2}-{a_3}=(3^{x_2}+2^n)-(3^{m-1}+2^{n-1}),$$ therefore ${x_2}=0$, ${a_2}=1+2^n$, thus ${a_1}+{a_3}=2\cdot{a_2}$, thereby $3^m+2+3^{m-1}+2^{n-1}=2\cdot2^{n+1}$, therefore $4\cdot3^{m-1}=3\cdot2^{n-1}$, thus $$3^{m-2}=2^{n-3}.$$ Combing with the fact that $m\geq 6$ yields a contradiction.
		
		(2) If ${a_1}=3^{x_1}+2^n$, thus $$3\mid 2d={a_1}-{a_3}=3^{x_1}+2^n-2^{n-1}-3^{m-1},$$ thereby ${x_1}=0$, thus ${a_1}=2^n+1$. Applying Proposition \ref{prop2.2} shows that $${a_2}=3^m+2^{y_2},$$ $$4\mid 2d={a_1}-{a_3}=2^n+1-2^{n-1}-3^{m-1}=2^{n-1}-(3^{m-1}-1).$$
		Thus $4\mid 3^{m-1}-1$, $2\mid m-1$, $8\mid 3^{m-1}-1$, therefore $8\mid 2d$, thus $$4\mid d={a_2}-{a_3}=3^m+2^{y_2}-2^{n-1}-3^{m-1}=2\cdot3^{m-1}+2^{y_2}-2^{n-1}.$$
		Thus ${y_2}=1$, thereby ${a_2}=3^m+2$, and ${a_1}+{a_3}=2\cdot{a_2}$, thus $$(2^n+1)+(3^{m-1}+2^{n-1})=2\cdot3^m+4.$$Thus $5\cdot3^{m-1}=3\cdot2^{n-1}-3$,
		thus $5\cdot3^{m-2}=2^{n-1}-1$. Since $m \ge 6$, thus $9\mid 2^{n-1}-1$, thereby     $6\mid n-1$, thereby $63\mid 2^{n-1}-1$. 
		
		Finally, it can be introduced that $7\mid 2^{n-1}-1$. But $7\nmid 5\cdot3^{m-2},$ this is a contradiction.
		
		In conclusion, ${a_3} \ne 3^{m-1}+2^{n-1}$, thus ${a_1},{a_2},{a_3}$ is $3$-dominated or $2$-dominated, therefore ${a_4}$ is not this type. Applying Proposition \ref{prop2.1}, it suffices to consider the following two scenarios: ${a_4}={3^{m-1}+2^{y_4}}$ or ${2^{n-1}+3^{x_4}}$.
	\end{proof}

	\begin{proposition}
		${a_4} \neq {2^{n-1}+3^{x_4}}$.
		\label{prop2.8}
	\end{proposition}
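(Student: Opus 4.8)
The plan is to argue by contradiction: suppose $a_4 = 2^{n-1} + 3^{x_4}$. By Proposition \ref{prop2.7} we already know that $a_1, a_2, a_3$ are each either $3$-dominated or $2$-dominated, and by Propositions \ref{prop2.4} and \ref{prop 2.5} they cannot all be of the same kind; since $a_4$ is now weak-dominated, at most three of $a_1,\dots,a_4$ are dominated, so Proposition \ref{prop 2.6} imposes no further restriction and the only structural constraint on $a_1,a_2,a_3$ is that between one and two of them are $2$-dominated. I would organize the whole argument around the value of $x_4$, treating $x_4 \ge 1$ and $x_4 = 0$ separately, and within the first branch around which of $a_1, a_2, a_3$ carries the $2$-dominated type, using throughout $6 \mid d$ (Lemma \ref{lem2.3}), $d \ge 500$ (Lemma \ref{lem2.1}) and the fact that every $y_i \ge 1$ (Remark \ref{rem2.1}).

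First I would exploit $3 \mid d$ through a reduction modulo $3$. Since all seven terms are congruent modulo $3$, each $a_i$ has the same residue as $a_4 \equiv (-1)^{n-1} \pmod 3$ in the branch $x_4 \ge 1$. A $2$-dominated term $2^n + 3^{x_i}$ with $x_i \ge 1$ is $\equiv (-1)^n \pmod 3$, which is incompatible; hence every $2$-dominated term among $a_1, a_2, a_3$ must have $x_i = 0$, i.e.\ equals $2^n + 1$. As the terms are distinct, at most one of them equals $2^n+1$, while Proposition \ref{prop2.4} forbids all three being $3$-dominated; so in the branch $x_4 \ge 1$ \emph{exactly one} of $a_1, a_2, a_3$ is $2$-dominated and equals $2^n+1$, and the other two are $3$-dominated, say $3^m + 2^{y_i}$ and $3^m + 2^{y_j}$ with $y_i \equiv y_j \equiv n-1 \pmod 2$. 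This rigid shape is the key reduction.

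With the configuration fixed I would solve the difference relations $a_1 - a_4 = 3d$, $a_2 - a_4 = 2d$, $a_3 - a_4 = d$ in each of the three placements of the term $2^n+1$. The two $3$-dominated terms give $2^{y_i} - 2^{y_j} = (j-i)d$ with $j-i \in \{1,2\}$, which determines $\nu_2(d)$ and the gap $y_i - y_j$; substituting this together with $a_4 = 2^{n-1} + 3^{x_4}$ and $2^n+1$ into the remaining relations collapses everything to a single exponential Diophantine equation relating $3^m$, $3^{x_4}$, $2^n$ and $2^{n-1}$, of the same type as \eqref{2.3} in the proof of Proposition \ref{prop2.4}. I would finish each such equation either by Proposition \ref{prop 1.1}, which admits no solution once $m \ge 6$, or by the prime-modulus trick used at the end of Proposition \ref{prop2.7} (locating a prime dividing one side but not the other, as with $7 \mid 2^{n-1}-1$ there).

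The remaining branch $x_4 = 0$, where $a_4 = 2^{n-1}+1$, must be treated on its own, because then $a_4 \equiv (-1)^{n-1}+1 \pmod 3$ and the modulo $3$ step no longer annihilates the $2$-dominated terms. Here I would instead reduce modulo $4$ and $8$, which is legitimate since $a_1 \ge 3^9 > 2^{14}$ forces $n-1 \ge 13$ and hence $2^{n-1} \equiv 0$ to high $2$-adic order, to constrain the parities of the surviving exponents before again reaching a Catalan-type equation $3^s = 2^t \pm 1$ that Proposition \ref{prop 1.1} excludes for large $m$. The hard part will be precisely this $2$-adic bookkeeping: when the two $3$-dominated terms are \emph{not} adjacent (difference $2d$ instead of $d$) one loses a factor of $2$ in $\nu_2(d)$, and one must check that the forced valuation remains consistent with $d \ge 500$ and that the residual equation still has no large solution. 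Producing an exhaustive but non-redundant list of placements, and selecting the correct auxiliary modulus to kill each leftover equation, is where the real work lies; once a configuration is fixed the algebra is routine.
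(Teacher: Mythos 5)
Your reduction to the key configuration is essentially the paper's: using $3\mid d$ you force every $2$-dominated term among $a_1,a_2,a_3$ to have exponent $0$, hence exactly one of them is $2^n+1$ and the other two are $3^m+2^{y}$. But two genuine gaps remain. First, the branch $x_4=0$ that you defer as ``the hard part'' is unnecessary: since $2\mid d$ makes all $a_i$ odd and every $y_i\ge 1$, one has $2\cdot 3^{x_4}=2a_4-2^n=a_1+a_7-2^n\ge (2^n+1)+3-2^n>2$, so $x_4\ge 1$ immediately. Your plan leaves that branch unexecuted and proposes substantial $2$-adic work for a case that a one-line size argument eliminates.

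Second, and more seriously, your endgame does not go through as described. The relation $2^{y_i}-2^{y_j}=(j-i)d$ between the two $3$-dominated terms pins down $\nu_2(d)$ but does \emph{not} determine the gap $y_i-y_j$; and after substitution the residual relations, such as $2^n+1-3^m=2^{y_2+1}-2^{y_3}$ and $3^m+2^{y_3}-3^{x_4}-2^{n-1}=2^{y_2}-2^{y_3}$, contain four or five exponential terms with independent exponents, so they are not ``of the same type as \eqref{2.3}'' and Proposition \ref{prop 1.1} (which concerns $3^a+2^b=3^c+2^d$) cannot be applied to them. The paper closes these cases by a different mechanism: for the placements $a_3=2^n+1$ and $a_2=2^n+1$ it derives two incompatible inequalities between $2^n$ and $3^m$ from the progression structure (e.g.\ $4(a+3d)>3(a+4d)$ against $5(a+3d)>3(a+5d)$), and for the hardest placement $a_1=2^n+1$ it splits on $\nu_2(d)$, pushes the valuation up to $16\mid 2d$ to force $x_4\le m-4$, and again plays two size estimates against each other. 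This two-sided inequality argument is the missing idea; ``once a configuration is fixed the algebra is routine'' underestimates exactly the part of the proof that carries the weight.
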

	\begin{proof}
		When ${a_4}=3^{x_4}+2^{n-1},$ then $$2^n+2\cdot3^{x_4}=2\cdot{a_4}={a_1}+{a_7} > 2^n+2.$$
		where the inequality comes from the fact that Lemma \ref{lem2.2}, ${a_1}, \dots, {a_7}$ is odd and ${a_1} \ge 2^n$. Thus ${a_1} \ge 2^n+1$, ${a_7} \ge 2^1+1$, thus ${x_4} \ge 1$.
		
		Since ${a_1},{a_2},{a_3},{a_4}$ not three $3$-dominated, there must be ${2^n+3^x}$ in ${a_1},{a_2},{a_3}$, take the one with the smallest $x$. Since $$3\mid d\mid 2^n+3^x-(3^{x_4}+2^{n-1}),$$ thereby $x=0~({x_4} \ge 0)$. Thus there exists $1 \le i \le 3$, ${a_i}=2^n+1$.
		
		For $1 \le j \le 3$, $j \ne i$, $$3\mid d\mid({a_j}-{a_i})=3^{x_j}+(2^{y_j}-2^n)-1.$$
		If ${x_j}=0$, thus ${y_j}=n$, ${a_i}={a_j}$, this is a contradiction. Thus ${x_j}\ge 1$, $3\mid 2^{y_j}-2^n$. Therefore, ${x_j}=m$, thus $${a_j}=3^m+2^{y_j}.$$
		
		We consider the three cases of $${a_i}=2^n+1, 1 \le i \le 3.$$
		
		{\bf Case 1:} ${a_3}=2^n+1$. Then
		$${a_1}={3^m+2^{y_1}}, {a_2}=3^m+2^{y_2}, {a_3}={1+2^n},{a_4}={3^{x_4}+2^{n-1}}.$$
		
		Since ${3^m \le a+6d \le 2^{n+1}}$, thus
		\begin{equation*}
			\begin{aligned}
				{a+3d}={a_4}&={3^{x_4}+2^{n-1}} \le{3^{m-2}+2^{n-1}} \\ &\le  \frac{1}{2}\cdot{2^n}+ \frac{1}{9}\cdot{3^m} \le \frac{1}{2}\cdot{2^n}+ \frac{1}{9}\cdot{2^{n+1}}\\&=\frac{13}{18}\cdot{2^n} < \frac{3}{4}\cdot{2^n} < \frac{3}{4}(2^n+1) =  \frac{3}{4}(a+4d).
			\end{aligned}
		\end{equation*}
		This is a contradiction.
		
		{\bf Case 2:}  ${a_2}=2^n+1$. Then
		$${a_1}={3^m+2^{y_1}}, {a_2}={1+2^n}, {a_3}={3^m+2^{y_3}},{a_4}={3^{x_4}+2^{n-1}}.$$
		
		Since
		${4(a+3d) >3(a+4d)}$, thereby $4(3^{m-2}+2^{n-1}) > 3\cdot3^m$, thereby $${2^{n+1} >23\cdot3^{m-2}}.$$
		Meanwhile ${5(a+3d) >3(a+4d)}$, thereby $5(3^{m-2}+2^{n-1}) > 3\cdot 2^n $, thereby $$ {5\cdot 3^{m-2} >2^{n-1}}.$$
		From above we obtain $$2^{n-1} >\frac{23}{4}\cdot 3^{m-2}>{5\cdot 3^{m-2} >2^{n-1}},$$
		this is a contradiction.
		
		{\bf Case 3:}  ${a_1}=2^n+1$. Then
		$${a_1}={1+2^n}, {a_2}={3^m+2^{y_2}}, {a_3}={3^m+2^{y_3}},{a_4}={3^{x_4}+2^{n-1}}.$$
		
		(1) If ${\nu_2}(d)=1.$
		
		Since ${y_2} > {y_3} \ge 1$, thereby ${y_2} \ge 2$; Since ${2\mid d}$, thereby $$4\mid 2d=(3^m+2^{y_2})-(3^{x_4}+2^{n-1}) \equiv 3^m-3^{x_4}~(\mod~4).$$ Therefore, ${x_4}$ has the same parity as $m$.
		
		Since ${\nu_2}(d)=1$, we have 
		$$4 \nmid {3^m+2^{y_3}-3^{x_4}-2^{n-1}},$$thereby ${y_3}=1$.
		But ${\nu_2}(d)=1$, thus ${\nu_2}(2d)=2.$ Thereby $${{\nu_2}(3^m+2^{y_2}-3^{x_4}-2^{n-1}) =2},$$ thus ${y_2}=2$, which implies that $d=2$. This is a contradiction.
		
		(2) If ${\nu_2}(d) \ge 2$.
		
		we hace $$4\mid 3d =({2^n}+1)(2^{n-1}+3^{x_4}).$$ Therefore, 
		${x_4}$ is an even number, thus $$3d \equiv {3^{x_4}-1} \equiv 0~(\mod~8),$$ thereby  $8\mid d$.
		
		If $y_2\leq n-5$, then combining with the fact that ${5(a+6d)} < {6(a+5d)}$ and ${3(a+4d)} < {4(a+3d)}$, it yields that ${77\cdot2^{n-4} <6\cdot3^m}$ and ${2^{n+1} >23\cdot3^{m-2}}$. This is a contradiction. Thus ${y_2} \ge n-4$. 
		
		Since ${m>6}$, we get that ${n>7}$ and   ${{y_2} \ge 4}$.
		Thus $${16\mid 2d} ={2^{y_2}+3^m-(2^{n-1}+3^{x_4}) \equiv {3^m-3^{x_4}}(\mod~16)},$$
		thereby ${x_4} \equiv m~(\mod~4)$, thus ${x_4} \le m-4$.
		
		Since $${3(a+4d) < 4(a+3d)},$$ thus
		\begin{equation}\label{2.4}
			{2^{n+1} >239\cdot3^{m-4}};
		\end{equation}
		If ${y_2} \le n-3$,
		since  $${5(a+6d) < 6(a+5d)},$$ thus$$5(2^n+1) <6(3^m+ 2^{n+3}),$$ thereby
		\begin{equation}\label{2.5}
			7\cdot2^{n-2} <6\cdot3^m .
		\end{equation}
		Combining \eqref{2.4} and \eqref{2.5} shows that this is contradictory. 
		
		Therefore ${y_2} >n-3$, and $$3\mid d =2^n+1-(3^m+2^{y_2}),$$ therefore, ${y_2}$ and $n$  have different parity, thus ${y_2}=n-1$.
		
		Since $$5(a+3d)>3(a+5d),$$
		thus $$5(3^{x_4}+2^{n-1}) > 3(3^m+2^{n-1}),$$ thereby
		\begin{equation}\label{2.6}
			{2^n} > {3^{m+1}-5\cdot3^{x_4}} \ge {3^{m+1}-5\cdot3^{m-4}} = 238\cdot3^{m-4}.
		\end{equation}
		Combining \eqref{2.4} and \eqref{2.6} shows that this is contradictory. 
	\end{proof} 
	
	\begin{proposition}
		${a_4} \neq {3^{m-1}+2^{y_4}}$.
		\label{prop2.9}
	\end{proposition}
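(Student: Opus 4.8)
The plan is to assume for contradiction that $a_4=3^{m-1}+2^{y_4}$ and to exploit the central symmetry of the progression together with the size and valuation data already collected. Since $a_4=a+3d$ is the middle of the seven equally spaced terms, $a_{4-k}+a_{4+k}=2a_4$ for $k=1,2,3$; in particular
$a_1+a_7=a_2+a_6=a_3+a_5=2a_4=2\cdot 3^{m-1}+2^{y_4+1}$.
From $a_1\ge 3^m$ and the fact, via Lemma~\ref{lem2.2} and Remark~\ref{rem2.1}, that every term is odd with $y_i\ge 1$ (so $a_7\ge 3$), the identity $a_1+a_7=2a_4$ forces $2^{y_4+1}\ge 3^{m-1}+3>3^{m-1}$. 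Combining this with $3^m\le a_1<2^{n+1}$ and $2^n\le a_1<3^{m+1}$ localizes the free exponent to $n-4\le y_4\le n$ and pins $d$ to the order of $3^{m-2}$; these bounds are what make the later spacing inequalities usable.

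Next I would use $3\mid d$ (Lemma~\ref{lem2.3}) to classify the remaining terms: every $a_i$ satisfies $a_i\equiv 2^{y_4}\equiv(-1)^{y_4}\pmod 3$. This forces each $3$-dominated term $3^m+2^{y_i}$ to obey $y_i\equiv y_4\pmod 2$, each $2$-dominated term $2^n+3^{x_i}$ with $x_i\ge 1$ to obey $n\equiv y_4\pmod 2$, and makes the $2$-dominated terms uniform in whether $x_i=0$ or $x_i\ge 1$ (the two sub-types differ mod $3$). By Propositions~\ref{prop2.7} and \ref{prop2.8} we already know $a_1,a_2,a_3$ are each $3$- or $2$-dominated, and by Propositions~\ref{prop2.4} and \ref{prop 2.5} they are not all of one type, so there is at least one term of each type among $a_1,a_2,a_3$.

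With this structure I would force a minimal-exponent normal form exactly as in the proof of Proposition~\ref{prop2.8}: reading residues mod $3$ off the differences forces a $2$-dominated term into the shape $2^n+1$, while reading the $2$-adic valuation $\nu_2(d)$ off the differences forces a $3$-dominated term into the shape $3^m+2$. I would then run a case analysis on the positions and sub-types of these terms among $a_1,a_2,a_3$, using the symmetry identities to solve for $a_5,a_6,a_7$. In each configuration the contradiction comes from feeding $\nu_2(d)$ into the spacing inequalities $3a_3<4a_4$, $5a_1<6a_2$ and $5a_4>3a_2$ (each equivalent to $a>0$), which convert into incompatible upper and lower bounds for $2^n$ in terms of powers of $3$: for the easy configurations a single pair already sandwiches $2^n$ between two multiples of $3^{m-2}$ that cannot coexist.

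The main obstacle is the residual thin band of cases where the coarse bounds do not close; there one must simultaneously control the exact value of $\nu_2(d)$ (hence congruences of the exponents modulo $8$ and $16$) and the tight AP-spacing estimates, producing a squeeze of $2^n$ between incompatible multiples of $3^{m-4}$ of the type \eqref{2.4} and \eqref{2.6} in Proposition~\ref{prop2.8}. Once every configuration is eliminated we conclude $a_4\neq 3^{m-1}+2^{y_4}$; together with Propositions~\ref{prop2.7} and \ref{prop2.8} this forces $a_4$ to be $3$- or $2$-dominated, so all of $a_1,\dots,a_4$ are, contradicting Propositions~\ref{prop2.4}, \ref{prop 2.5} and \ref{prop 2.6}. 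This completes the proof that no length-seven progression exists, establishing Theorem~\ref{thm 1.1}.
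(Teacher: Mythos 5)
Your overall strategy---use the symmetry $2a_4=a_1+a_7$ to bound $y_4$, combine $3\mid d$ and $\nu_2(d)$ to force a term of a special shape, then close each configuration with the spacing inequalities $3a_3<4a_4$, $5a_4>3a_2$, etc.---is indeed the architecture of the paper's proof, which derives $y_4>4$ from $2\cdot3^{m-1}+2^{y_4+1}=a_1+a_7>3^m$, then forces a term $a_i=3^m+2$ among $a_1,a_2,a_3$, and finishes with a three-case analysis pitting $2^{n-1}$ against $3^{m-1}$ from both sides. However, your sketch has a concrete gap at the structural step. You correctly observe that $3\mid d$ only makes the $2$-dominated terms \emph{uniform} in whether $x_i=0$ or $x_i\ge 1$ (since $3^{m-1}\equiv 0\pmod 3$ here, unlike in Proposition~\ref{prop2.8} where $2^n-2^{n-1}=2^{n-1}\not\equiv 0\pmod 3$ forces $x=0$), yet two sentences later you assert that the mod~$3$ residues ``force a $2$-dominated term into the shape $2^n+1$.'' That is not derivable by the stated method and contradicts your own classification; indeed the paper's final three cases all carry $2$-dominated terms $3^{x_i}+2^n$ with no constraint $x_i=0$.

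The second, larger gap is that the hard combinatorial core is only gestured at. The paper must (i) prove that the \emph{minimal} exponent $y$ among the $3$-dominated terms equals $1$, via a two-case argument on $x_2\bmod 2$ comparing $\nu_2\bigl(2\cdot3^{m-1}+2^y-2^{y_4}\bigr)$ with $\nu_2(d)$ and $\nu_2(2d)$, and (ii) rule out a \emph{second} $3$-dominated term among $a_1,a_2,a_3$ by a residue computation mod~$8$ showing $a_3=3^{x_3}+2^n\equiv 7\pmod 8$ is impossible. Neither step is present in your plan, and without (ii) the final case analysis does not reduce to the three clean configurations you would need for the sandwich $2^{n-1}<3^{m-1}<2^{n-1}$ (Cases 1--2) and $5\cdot3^{m-1}>2^n>5\cdot3^{m-1}$ (Case 3). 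As written, the proposal is a plausible outline in the same spirit as the paper, but it does not yet constitute a proof.
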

	
	\begin{proof}
		If ${a_4}=3^{m-1}+2^{y_4}$. Since $${2\cdot3^{m-1}+2^{{y_4}+1}=2{a_4}={a_7}+{a_1} > 3^m},$$and $m>11$,
		thus ${y_4}> 4 $.
		
		Since there must be a number in ${a_1},{a_2},{a_3}$ that is $3$-dominated, suppose $3^m+2^y$ is smallest. Since $d$ is a multiple of 2, therefore $$2\mid 3^m+2^y-(3^{m-1}+2^{y_4}),$$ Thus $y \ge 1$.
		
		We are going to prove $y=1$ by contradiction.
		
		If $y \ge 2$, thus $${3^m+2^y}-(3^{m-1}+2^{y_4})=2\cdot3^{m-1}+2^y-2^{y_4} \equiv 2~(\mod~4).$$
		Thus $d$ is not a multiple of 4, thereby  $d \equiv 2~(\mod~4)$. Thus $$2d ={a_2}-{a_4}=3^{x_2}+2^{y_2}-(3^{m-1}+2^{y_4}) \equiv 4~(\mod ~8).$$

		This is discussed below in two cases.
		
		{\bf Case 1:}\ \ When ${x_2} \equiv m-1~(\mod~2)$, thus ${x_2} \ne m$, thus ${y_2}=n>3$, therefore $$2d \equiv 3^{x_2}-3^{m-1} \equiv 0~(\mod~8).$$ This is a contradiction. 
		
		{\bf Case 2:}\ \ When ${x_2} \not\equiv m-1~(\mod~2)$, thus ${3^{x_2}-3^{m-1}}$ not a multiple of 4, but $4\mid 2^{y_4}$,
		thus $2^{y_2}$ not a multiple of 4, therefore ${y_2}=1<y$. This contradicts the minimality of $y$.
		
		Combining case 1 and case 2 shows that $y=1$. Thus there exists $1 \le i \le 3$, such that $${a_i}=3^m+2.$$
		Since $d>1000$, therefore, for $1 \le j \le 3$, $j\ne i$, ${a_j} \ne {3^m+4} $.
		
		We consider that $${{a_i}-{a_4}}={3^m-3^{m-1}+2-2^{y_4}}={2\cdot(3^{m-1}+1)-2^{y_4}},$$If there exists $1 \le j \le 3$ and $j \ne i$, such that ${a_j}=3^m+2^{y_j}$, thus ${y_j}>1$. Therefore 
		$${a_j}-{a_4}=2\cdot3^{m-1}+2^{y_j}-2^{y_4} \equiv 2~(\mod~4),$$thus $d \equiv 2~(\mod~4)$. 
		Since ${a_j}-{a_4}$ is  a multiple of 4, thus $i=2,j=1$. Thus $${a_1}=3^m+2^{y_1},{a_2}=3^m+2,{a_4}=3^{m-1}+2^{y_4}.$$Applying Proposition \ref{prop2.4}, we have $${a_3}=2^n+3^{x_3}.$$
		From above we have$${a_1}=3^m+2^{y_1},{a_2}=3^m+2,{a_3}=2^n+3^{x_3},{a_4}=3^{m-1}+2^{y_4}.$$
		
		We consider that $$2d={a_2}-{a_4}=2\cdot(3^{m-1}+1)-2^{y_4}$$ is not a multiple of 8.
		Thus $3^{m-1} \equiv 1~(\mod~8)$, therefore $${a_4}=1~(\mod~8), {a_2}=5~(\mod~8).$$
		Since $d>1000$, ${a_1} \ge {a_2}+1000$, thus ${y_1 }\ge 4$, thereby
		$${a_1} \equiv 3^m \equiv 3~(\mod~8).$$Thus $${a_3}=3^{x_3}+2^n \equiv 7~(\mod~8),$$ but $$3^{x_3} \equiv 1~(\mod~8)~or~3^{x_3} \equiv 3~(\mod~8).$$ This is a contradiction.
		
		Finally, we discuss each of the three cases of $${a_i}={3^m}+2, i=1,2,3.$$
		
		{\bf Case 1 :}\ \ When ${a_1}={3^m}+2$, then
		$${a_1}={3^m+2},{a_2}={3^{x_2}+2^n},{a_3}={3^{x_3}+2^n},{a_4}={3^{m-1}+2^{y_4}}.$$
		
		Since $$\frac{3}{4}\cdot{a_3}=\frac{3}{4}(a+4d) \le (a+3d) ={a_4},$$and ${y_4} \le n-2,$
		thereby $$\frac{3}{4}(3^{x_3}+2^n) \le {3^{m-1}+2^{y_4}} \le {3^{m-1}+2^{n-2}},$$ thus $${2^{n-1} <3^{m-1}}.$$
		
		On the other hand, we have$${2(a+3d)= 2{a_4}={a_1} +{a_7}\ge{a_1}= {a+6d} },$$ thereby $${(3^{m-1}+2^{y_4})\cdot2 \ge 3^m+2},$$
		thus $${2^{n-1} > 3^{m-1}}.$$
		This is a contradiction.
		
		{\bf Case 2 :}\ \ When ${a_2}={3^m}+2$, then
		$${a_1}={3^{x_1}+2^n},{a_2}={3^m+2},{a_3}={3^{x_3}+2^n},{a_4}={3^{m-1}+2^{y_4}}.$$
		
		Since $$\frac{3}{4}\cdot{a_3}=\frac{3}{4}(a+4d) \le (a+3d) ={a_4},$$ and ${y_4} \le n-2.$ Therefore, $$\frac{3}{4}(3^{x_3}+2^n) \le {3^{m-1}+2^{y_4}},$$ thus $${2^{n-1} <3^{m-1}}.$$
		
		On the other hand, since $${\frac{3}{5}(a+5d) < {a+3d}},$$
		thus $${3(3^m+2) <{5(3^{m-1}+2^{y_4})} \le {5\cdot3^{m-1}+5\cdot2^{n-2}}},$$ thus $${4\cdot3^{m-1} <5\cdot2^{n-2}},$$ thereby $${3^{m-1} <2^{n-1}}.$$
		This is a contradiction.
		
		{\bf Case 3 :}\ \ When ${a_3}={3^m}+2$, then
		$${a_1}={3^{x_1}+2^n},{a_2}={3^{x_2}+2^n},{a_3}={3^m+2},{a_4}={3^{m-1}+2^{y_4}}.$$
		Since$$ 4(a+3d) >3(a+4d),$$ thus $$4(3^{m-1}+2^{n-2}) >3\cdot3^m ,$$ thereby $$2^n >5\cdot3^{m-1}.$$ 
		
		On the other hand,$$5(a+3d) >3(a+5d),$$ thus $$5(3^{m-1}+2^{n-2}) >3\cdot2^n,$$ thereby $$5\cdot3^{m-1} >7\cdot2^{n-2}>2^n.$$ 
		This is a contradiction.
	\end{proof} 
	
	From the proof of the above proposition, we obtain that there is no an arithmetic progression of the form ${3^x+2^y}$ of lenth seven. The proof of theorem \ref{thm 1.1} is complete.

	\appendix
	
	\section{Appendices}
	
	\begin{figure}[h]
		\centerline{\includegraphics[width= 5 in]{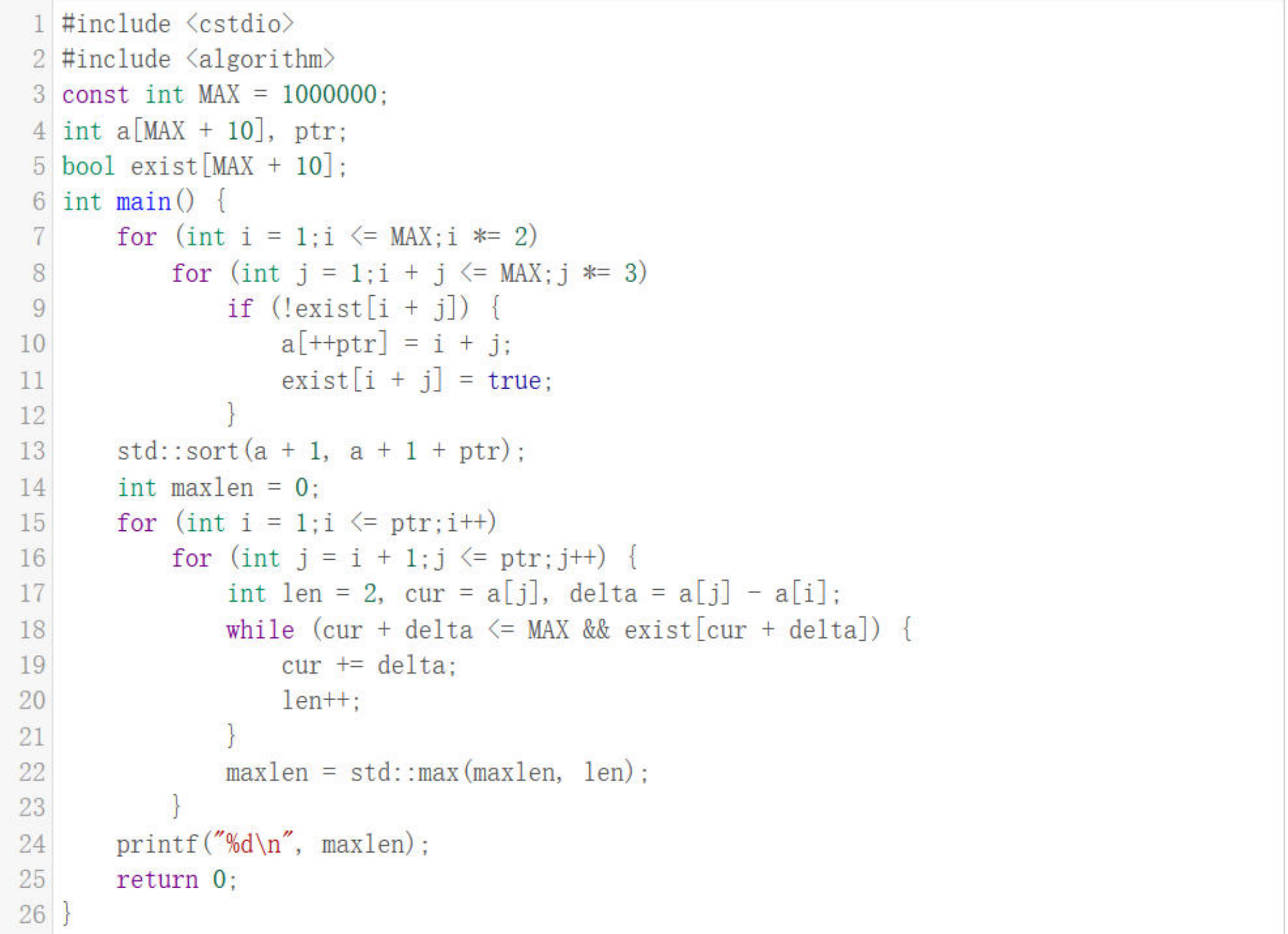}}
		\vspace*{8pt}
		\caption{program \label{fig1}}
	\end{figure}
	\FloatBarrier

\end{document}